\theoremstyle{plain}
\newtheorem{prop}{Proposition}[section]
\newtheorem{thm}[prop]{Theorem}
\newtheorem{lemma}[prop]{Lemma}
\newtheorem{cor}[prop]{Corollary}
\theoremstyle{remark}
\theoremstyle{definition}
\newtheorem{rmrk}[prop]{Remark}
\newtheorem{mydef}[prop]{Definition}
\newcommand{\cat}[1][C]{\mathcal{#1}}
\newcommand{\inv}{^{-1}}
\newcommand{\sweedler}[1]{_{(#1)}}
\newcommand{\dualsymbol}{\vee}
\newcommand{\dualL}[1]{#1^{\dualsymbol}}
\newcommand{\ddualL}[1]{#1^{\dualsymbol\dualsymbol}}
\newcommand{\dualR}[1]{\prescript{\dualsymbol}{}{#1}}
\newcommand{\one}{\mathbf{1}}
\DeclareMathOperator{\Hom}{Hom}
\DeclareMathOperator{\id}{id}
\DeclareMathOperator{\End}{End}
\DeclareMathOperator{\ev}{ev}
\DeclareMathOperator{\coev}{coev}
\DeclareMathOperator{\evL}{\overset{\xleftarrow{\phantom{ev}}}{\ev}}
\DeclareMathOperator{\coevL}{\overset{\xleftarrow{\phantom{coev}}}{\coev}}
\DeclareMathOperator{\evR}{\overset{\xrightarrow{\phantom{ev}}}{\ev}}
\DeclareMathOperator{\coevR}{\overset{\xrightarrow{\phantom{coev}}}{\coev}}
\newcommand{\Vect}{{\mathsf{Vect}}}
\newcommand{\coint}{\boldsymbol{\lambda}}
\newcommand{\cointL}{\coint^l}
\newcommand{\cointR}{\coint^r}
\newcommand{\intQ}{\boldsymbol{\Lambda}}
\newcommand{\trivialMod}[1]{\prescript{}{\counit}{#1}}
\newcommand{\field}{\Bbbk}
\newcommand{\modTr}{\mathsf{t}}
\newcommand{\proj}[1][\cat]{\Proj(#1)}
\newcommand{\op}{^{\textup{op}}}
\newcommand{\cop}{^{\textup{cop}}}
\DeclareMathOperator{\Proj}{Proj}
\newcommand{\oneQ}{\boldsymbol{1}}
\newcommand{\counit}{\varepsilon}
\newcommand{\coassQ}{\Phi}
\newcommand{\invCoassQ}{\Psi}
\newcommand{\betaQ}{\boldsymbol{\beta}}
\newcommand{\alphaQ}{\boldsymbol{\alpha}}
\newcommand{\pivotQ}{{\boldsymbol{g}}}
\newcommand{\Dt}{\boldsymbol{f}}
\newcommand{\modulus}{{\boldsymbol{\gamma}}}
\newcommand{\pR}{p^R}
\newcommand{\qR}{q^R}
\newcommand{\pL}{p^L}
\newcommand{\qL}{q^L}
\newcommand{\elU}{\textsf{U}}
\newcommand{\elV}{\textsf{V}}
\newcommand{\elu}{u}
\newcommand{\cointSym}{\widehat{\coint}}
\newcommand{\cointSymR}{\cointSym^r}
\newcommand{\cointSymL}{\cointSym^l}
\newcommand{\symIntSpace}{Sym_{tr}}
\newcommand{\hpmod}[1][H]{#1\text{-pmod}}
\newcommand{\hmod}[1][H]{#1\text{-mod}}
\newcommand{\Fr}{\phi^r}
\newcommand{\Fl}{\phi^l}
\newcommand{\Gr}{\psi^r}
\newcommand{\Gl}{\psi^l}
\newcommand{\symFerm}{\mathsf{Q}}
\newcommand{\genK}{\mathsf{K}}
\newcommand{\genF}{\mathsf{f}}
\newcommand{\eQ}{\boldsymbol{e}}
\newcommand{\ribbon}{\boldsymbol{v}}
\newcommand{\drinfeldElement}{\boldsymbol{u}}
\newcommand{\basisEl}[3]{B_{\vec{#1},\vec{#2},#3}}
\numberwithin{equation}{section}
\newcommand{\ipic}[3]{\raisebox{#1\height}{\scalebox{#3}{\includegraphics{#2.pdf}}}}
\title{Modified traces for quasi-Hopf algebras}
\begin{document}
\thispagestyle{empty}

\maketitle

\begin{center} 
		Johannes Berger\,$^{a}$,
		Azat M.\ Gainutdinov\,$^{a,b}$~~and~~Ingo Runkel\,$^a$~~\footnote{Emails: 
			{\tt johannes.berger@uni-hamburg.de},
			{\tt azat.gainutdinov@lmpt.univ-tours.fr}, 
			{\tt ingo.runkel@uni-hamburg.de}
        }
	\\[1.5em]
	{\sl\small $^a$ Fachbereich Mathematik, Universit\"at Hamburg\\
	Bundesstra\ss e 55, 20146 Hamburg, Germany}
\\[0.5em]
	{\sl\small $^b$ Institut Denis Poisson, CNRS, Universit\'e de Tours, Universit\'e d'Orl\'eans,\\ Parc de Grammont, 37200 Tours, France}
\end{center}

\vspace*{3em}

\begin{abstract}
Let $H$ be a finite-dimensional unimodular pivotal quasi-Hopf algebra over a field $\field$, and let $\hmod$ be the pivotal tensor category of finite-dimensional $H$-modules. We give a bijection between left (resp.\ right) modified traces on the tensor ideal $\hpmod$ of projective modules and left (resp.\ right) cointegrals for $H$. 
The non-zero left/right modified traces are non-degenerate, and we show that non-degenerate left/right modified traces can only exist for unimodular $H$.
This generalises results
of Beliakova, Blanchet, and Gainutdinov \cite{BBG} from Hopf algebras to quasi-Hopf algebras. 
As an example we compute cointegrals and modified traces for the family of symplectic fermion quasi-Hopf algebras.
\end{abstract}

\vspace*{4em}

\tableofcontents

\newpage
\section{Introduction}
Modified traces were introduced in \cite{GPMV, GKPM1}. 
They are a generalisation of the categorical trace in a pivotal linear category.
The latter is defined on the whole category and expressed in terms of duality morphisms and the pivotal structure.
The former are only defined on a suitable tensor ideal of the pivotal category, but they have better non-degeneracy properties than the categorical trace. 

The example of interest to us is that of a pivotal and unimodular finite tensor category
$\cat$, and its tensor ideal $\proj$ of projective objects.
If $\cat$ is not semisimple, the categorical trace vanishes identically on $\proj$.
For the modified trace, the opposite happens: there exists a non-zero modified trace on $\proj$ that induces a non-degenerate pairing on Hom-spaces	\cite{CGPM,GR,GKPM2}.
This observation has important applications to link invariants \cite{GPT,BBGe},
to three-dimensional topological field theories
 \cite{DRGPM}, 
as well as to vertex operator algebras with non-semisimple representation theory in the context of a conjectural Verlinde formula 
\cite{GR1,CG,GR}.

While there are explicit constructions of modified traces \cite{GKPM1}, these can be tedious to deploy in examples.
In~\cite{BBG} a simple description of modified traces was found in the case that $\cat = \hmod$ is the category of finite-dimensional representations of a finite-dimensional pivotal  unimodular Hopf algebra $H$: there is a one-to-one correspondence between left/right modified traces and left/right cointegrals of the Hopf algebra.
In this paper we generalise this result to quasi-Hopf algebras.
Since the definition of cointegrals for quasi-Hopf algebras is more complicated than that 
for Hopf algebras  \cite{HN-integrals, BC1,BC2},
the treatment is more technical than that in~\cite{BBG}, but the method of proof is the same.
Other generalisations of \cite{BBG} have been given in \cite{Ha,FOG}.

\medskip

Let us state our main result in more detail for right cointegrals and right modified traces.
The results for the left variant are analogous and can be found in the main text.
Our conventions for quasi-Hopf algebras are given in Section~\ref{Sec:qHopf}, so we will be brief here.
Let $\field$ be a field and let $H$ be a finite-dimensional unimodular
pivotal quasi-Hopf algebra over $\field$
with pivot $\pivotQ \in H$.
For $\coint \in H^*$ let $\widehat{\coint}$ be defined by $\widehat{\coint}(h) =
\coint(\pivotQ h)$ for all $h\in H$.
Then $\coint$ is a right cointegral if and only if
\begin{align}
	\widehat{\coint}(h) \, \oneQ ~=~ 
    \left( \widehat{\coint} \otimes \pivotQ \right) \big(\qR \,\Delta(h) \,\pR\big)
\end{align}
for all $h\in H$ (Corollary~\ref{coro_symmetrized_simple_condition}).
Here, $\qR = \invCoassQ_1\otimes S\inv(\alphaQ \invCoassQ_3)\invCoassQ_2$, $\pR = \coassQ_1\otimes \coassQ_2 \betaQ S(\coassQ_3)$, $\coassQ$ is the coassociator of $H$, $\invCoassQ$ its inverse, and $\alphaQ, \betaQ \in H$ are the evaluation and coevaluation element.

A right modified trace on $\proj$ is a collection of linear maps
\begin{align}
	\{ \modTr_P : \End_{\cat}(P) \to \field \}_{P\in \proj}
\end{align}
satisfying cyclicity and compatibility with the categorical trace (or rather with the right partial categorical trace, see Section~\ref{sec:modtr-def}).
The family $\modTr_\bullet$ is uniquely determined by its value on a projective generator. 
A right modified trace provides a pairing $\cat(M,P) \times \cat(P,M) \to \field$, $(f,g) \mapsto \modTr_P(f \circ g)$, for all $P \in \proj$ and $M \in \cat$.
If all these pairings are non-degenerate, $\modTr_\bullet$ is called non-degenerate.

Let now $\cat = \hmod$, the pivotal finite tensor category of finite-dimensional left $H$-modules.
We can take the left regular module $H$ as
 projective generator.
A non-degenerate right modified trace on $\proj$ exists if and only if $H$ is unimodular (Theorem~\ref{thm:symcoint-is-modtr}\,(1)).
In this case we prove (Theorem~\ref{thm:symcoint-is-modtr}\,(2)):

\begin{thm}\label{thm:intro}
Let $H$ be a finite-dimensional pivotal unimodular quasi-Hopf algebra over a field $\field$.
There is a one-to-one correspondence between right modified traces 
	$\modTr_\bullet$
on $\mathrm{Proj}(\hmod)$ and right cointegrals 
	$\coint \in H^*$
via
$$
    \modTr_H(f) = \widehat{\coint}\big(f(\oneQ)\big) \qquad \text{for all} \quad f \in \End_H(H) \ .
$$
In particular, such traces exist and are unique up to scalar multiples. 
Every non-zero right modified trace $\modTr_\bullet$  on $\mathrm{Proj}(\hmod)$ is non-degenerate.
\end{thm}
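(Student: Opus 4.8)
The plan is to pass through the standard identification $\End_H(H)\cong H\op$ given by $f\mapsto f(\oneQ)$, under which the proposed datum $\modTr_H$ is the same as a linear functional on $H$; calling this functional $\widehat{\coint}$ then fixes $\coint\in H^*$ via $\coint(h)=\widehat{\coint}(\pivotQ\inv h)$. Since a right modified trace on $\proj$ is uniquely determined by its value on the projective generator $H$, the whole statement reduces to deciding which functionals $\widehat{\coint}$ occur as $\modTr_H$ of an actual right modified trace, and to matching that class with the right cointegrals.

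The core step is to translate the modified-trace axioms into a condition on $\widehat{\coint}$. Cyclicity evaluated on $\End_H(H)$ contributes the symmetry $\widehat{\coint}(ab)=\widehat{\coint}(ba)$. The substantive input is compatibility with the right partial categorical trace: I would test it with $P=H$ and $X=H$ the regular module, on the endomorphism of $H\otimes H$ determined by $\Delta(h)$. Writing out $\mathrm{ptr}^r_H$ in terms of the explicit duality morphisms $\evR,\coevR$ (which carry $\alphaQ,\betaQ$), the structural elements $\qR,\pR$, and the pivot $\pivotQ$, this compatibility should collapse to
\[
  \widehat{\coint}(h)\,\oneQ ~=~ \big(\widehat{\coint}\otimes\pivotQ\big)\big(\qR\,\Delta(h)\,\pR\big), \qquad h\in H,
\]
which is exactly the defining identity of a right cointegral from Corollary~\ref{coro_symmetrized_simple_condition}. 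I expect this to be the main obstacle: in the quasi-Hopf setting the coassociator pins $\qR,\pR,\alphaQ,\betaQ$ to specific slots, and checking that the categorical partial trace over the regular module reduces to precisely the algebraic right-hand side above, with no stray reassociation, is the delicate computation that replaces the short Hopf-algebra argument of \cite{BBG}.

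Granting this equivalence, the bijection follows formally. The map $\coint\mapsto\modTr_\bullet$ with $\modTr_H(f)=\widehat{\coint}(f(\oneQ))$ is linear and injective, since $\widehat{\coint}$, and hence $\coint$, is read off from $\modTr_H$; it is surjective because any right modified trace produces via $\modTr_H$ a functional satisfying the displayed equation, i.e.\ a right cointegral. As the space of right cointegrals of a finite-dimensional quasi-Hopf algebra is one-dimensional, right modified traces exist and are unique up to a scalar.

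For non-degeneracy I would use that a non-zero right cointegral turns $\widehat{\coint}$ into a non-degenerate symmetric Frobenius form on $H$ — this is where unimodularity together with the pivot enters. Non-degeneracy of the pairing $\cat(M,H)\times\cat(H,M)\to\field$, $(f,g)\mapsto\modTr_H(f\circ g)$, then amounts to non-degeneracy of this form under $\End_H(H)\cong H\op$, and the case of an arbitrary $P\in\proj$ is reduced to the generator by realising $P$ as a direct summand of a finite direct sum of copies of $H$ and invoking additivity and naturality of $\modTr_\bullet$. Hence every non-zero right modified trace on $\proj$ is non-degenerate.
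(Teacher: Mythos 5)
Your proposal has the right target identity and the right formal skeleton, but the central equivalence is not established, and the specific test you propose is too weak to establish it. The statement requires two directions: (a) every right modified trace yields a functional satisfying the cointegral identity, and (b) every right cointegral, extended to a family of trace maps via Proposition~\ref{prop_sym_lin_form_extends_uniquely}, actually satisfies the right partial trace property for \emph{all} $P\in\proj$, $V\in\hmod$ and \emph{all} $f\in\End_H(P\otimes V)$. Testing only on the endomorphisms of $H\otimes H$ given by right multiplication with $\Delta(h)$ cannot give (b), since these span a tiny subspace of $\End_H(H\otimes H)$; and it does not even cleanly give (a): tracing out the second tensor factor of such an $f$ produces a scalar of the form $\mathsf{tr}_\field(l_x\circ r_{h\sweedler{2}})$, so the resulting condition on $t$ is a scalar identity involving traces of composites of left and right multiplications, not the identity $t(a)\,\oneQ=(t\otimes\pivotQ)(\qR\Delta(a)\pR)$ \emph{in} $H$. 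Without the latter you cannot conclude that $\coint$ is a right cointegral, and without (b) you cannot conclude existence of a nonzero modified trace (only uniqueness up to scalar, via the one-dimensionality of the space of cointegrals).

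The missing ingredient is the paper's Proposition~\ref{thm_tensor_powers_n_is_2} and Lemma~\ref{lemma_endomorphisms_Hop_matrix}: the $H$-module isomorphism $\Fr:H\otimes\trivialMod{W}\to H\otimes W$ (built from $\Delta(-)\pR$ and $\qR$) induces an algebra isomorphism $\End_H(H\otimes W)\cong H\op\otimes_\field\End_\field(W)$, so that a general endomorphism is $\Fr\circ(r_a\otimes m)\circ\Gr$. Cyclicity then gives $\modTr_{H\otimes W}(f)=\mathsf{tr}_\field(m)\,t(a)$, while the explicit partial trace gives $t(\qR_1a\sweedler{1}\pR_1)\,\mathsf{tr}_\field(l_{\pivotQ\qR_2a\sweedler{2}\pR_2}\circ m)$; letting $m$ range over all of $\End_\field(H)$ is exactly what strips the auxiliary trace and converts the condition into the identity in $H$ of Corollary~\ref{coro_symmetrized_simple_condition}, \emph{and}, read backwards, shows that this single identity implies the partial trace property for every endomorphism (whence, via the Reduction Lemma~\ref{lem:redLemma}, for all $P$ and $V$). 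You should also note that your reduction to $P=V=H$ needs that lemma, not merely the fact that $\modTr_\bullet$ is determined by its value on a generator. Your non-degeneracy argument matches the paper's (Corollary~\ref{prop_symmetrized_is_nondegenerate} plus the reduction to the generator) and is fine.
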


As an illustration we consider the family $\symFerm(N,\beta)$ of symplectic fermion quasi-Hopf
	algebras
introduced in \cite{FGR2}.
Here $N \in \mathbb{Z}_{>0}$ and $\beta \in \mathbb{C}$ satisfies $\beta^4 = (-1)^N$. 
$\symFerm(N,\beta)$ is a non-semisimple factorisable ribbon quasi-Hopf algebra, and so $\symFerm(N,\beta)\text{-mod}$ is a (non-semisimple) modular tensor category.
Conjecturally, for $\beta=e^{-i \pi N/4}$ 
this modular tensor category is equivalent to the category of representations of the even part of the vertex operator super algebra of $N$ pairs of symplectic fermions \cite[Conj.\,6.8]{FGR2}.
In Section~\ref{Sec:SymFerm} we give explicit formulas for the cointegrals on $\symFerm(N,\beta)$ (left and right cointegrals coincide in this case), 
as well as for the modified trace on projective modules
	(again the left and right variant coincide).

\bigskip

This paper is organised as follows.
In Sections~\ref{sec:modtr-def} and~\ref{Sec:qHopf} we review the necessary background on modified traces and on cointegrals in quasi-Hopf algebras, respectively.
In Section~\ref{sec:proof-modTr} we prove Theorem~\ref{thm:intro}, and in Section~\ref{Sec:SymFerm} we illustrate the result in the example of the symplectic fermion quasi-Hopf algebras.

\bigskip

While we were writing the present paper, the paper \cite{SS} by Shibata and Shimizu appeared, which also
contains a proof of Theorem~\ref{thm:intro}.

\bigskip

\noindent
{\bf Acknowledgements:}
We thank Ehud Meir and Tobias Ohrmann for useful discussions.
We are grateful to the anonymous referee for helpful suggestions and for providing us with
an improved version of Lemma \ref{prop:NewVersion3.7}.
AMG thanks the CNRS and ANR project JCJC ANR-18-CE40-0001 for support.
JB is supported by the Research Training Group 1670 of the DFG.

\bigskip

Throughout this paper we fix a field $\field$.

\section{Modified traces}\label{sec:modtr-def}

In this section we review the definition of modified traces from 
\cite{GKP-lastaddition,GPMV, GKPM1}
and recall some of their properties.
Throughout this section let $\cat$ be a pivotal finite tensor category over $\field$. 

\medskip

We denote the pivotal structure by $\delta_V : V \to V^{\vee\vee}$ and 
	will choose right duals and left duals to be identical as objects.
We write
\begin{align}
	\evL_V : \dualL{V} \otimes V \to \one
\quad , \qquad & 
\coevL_V: \one \to V \otimes \dualL{V} 
\nonumber	\\ 
	\evR_V : V\otimes \dualL{V} \to \one
\quad , \qquad & 
\coevR_V: \one \to \dualL{V} \otimes V \ ,
\end{align}
for the left ($\leftarrow$) and right ($\rightarrow$) evaluation and coevaluation maps.
They are related by 
$\evR_V = \evL_{\dualL{V}} \circ \left( \delta_V \otimes \id_{\dualL{V}} \right)$, etc.

Given a morphism $f : A \otimes C \to B \otimes C$ in $\cat$, its \textsl{right partial trace} over $C$ is defined to be (we omit writing `$\otimes$', 
	say $1$ instead of $\id$, and write $\xrightarrow{\sim}$ for the coherence isomorphisms)
\begin{align}
	\mathsf{tr}^r_C(f)
	= \big[ &
	A \xrightarrow{\sim} A \one \xrightarrow{1\,\coevL_{\!\!C}} A (C  C^\vee)
	 \xrightarrow{\sim} 
	(A C) C^\vee
	\xrightarrow{f \, 1} (B  C) C^\vee
	\nonumber \\ &
	\xrightarrow{\sim} B(CC^\vee) \xrightarrow{ 1\,\evR_{\!\!C} } B\one 
	\xrightarrow{\sim} B  
	\big] \ .
\end{align}
Analogously, the \textsl{left partial trace} over $C$ of a morphism $g:  C \otimes A \to C \otimes B$ is
\begin{align}
	\mathsf{tr}^l_C(g)
	= \big[ &
	A \xrightarrow{\sim} \one A  \xrightarrow{\coevR_{\!\!C}\,1} (C^\vee  C) A
	 \xrightarrow{\sim} 
	C^\vee  (C A)
	\xrightarrow{1 \, g } 	C^\vee  (C B)
	\nonumber \\ &
	\xrightarrow{\sim} 	(C^\vee  C)B
	 \xrightarrow{ \evL_{\!\!C} \, 1} \one B 
	\xrightarrow{\sim} B  
	\big] \ .
\end{align}

The original definition of modified traces from \cite{GPMV,GKPM1} is for general tensor ideals, but here we will restrict our attention to the tensor ideal
\begin{align}
		\proj ~\subset~ \cat \ ,
\end{align}
the full subcategory
consisting of all projective objects in $\cat$.
We note that unless $\cat$ is semisimple, the categorical trace vanishes identically on $\proj$, 
see e.g.\ \cite[Rem.\,4.6]{GR}.

\medskip

\begin{mydef}\label{def:mod-tr}~
\begin{enumerate}\setlength{\leftskip}{-1.7em}
\item[\em i)]
    A \textsl{right (left) modified trace} on $\proj$
    is a family of linear functions
    \begin{align}\label{modTrFamily}
        \{ \modTr_P : \End_{\cat}(P) \to \field \}_{P\in \proj}
    \end{align}
    satisfying two conditions, \textsl{cyclicity} and \textsl{right (left) partial trace property}, 
    given as follows.
    \begin{itemize}
        \item[1.] (\textsc{Cyclicity}) If $P,P'\in \proj$, then for all $f:P\to P'$, $g:P'\to P$ 
            we have
            \begin{align}\label{def_cyclicity}
                \modTr_P(g\circ f) = \modTr_{P'}(f\circ g)\ .
            \end{align}

        \item[2.] (\textsc{Right Partial Trace Property}) 
            If $P\in \proj$ and $V\in \cat$, then for all $f\in \End_{\cat}(P\otimes V)$
            \begin{align}\label{def_rightpartialtraceproperty}
                \modTr_{P\otimes V} (f) = \modTr_P\left( \mathsf{tr}^r_V(f) \right)\ .
            \end{align}

        \item[2$'$.] (\textsc{Left Partial Trace Property}) 
            If $P\in \proj$ and $V\in \cat$, then for all $f\in \End_{\cat}(V\otimes P)$
            \begin{align}\label{def_leftpartialtraceproperty}
                \modTr_{V\otimes P} (f) = \modTr_P\left( \mathsf{tr}^l_V(f) \right)\ .
            \end{align}
    \end{itemize}
    If such a family $\modTr_{\bullet}$ satisfies both the left and the right partial trace property,
    then it is simply called a \textsl{modified trace}.

\item[\em ii)]
    A right (resp.\ left)    
    modified trace $\modTr_{\bullet}$ is \textsl{non-degenerate} if the pairings
\begin{align}\label{def_modTr_pairing}
	\Hom_{\cat}(M,P) \times \Hom_{\cat}(P,M) \to \field,
	\qquad
	(f,g) \mapsto \modTr_P(f\circ g)\ ,
\end{align}
are non-degenerate for all $M\in \cat, P\in \proj$.
\end{enumerate}
\end{mydef}

In case $\field$ is algebraically closed and $\cat$ is unimodular (i.e.\ the socle and top of the projective cover of the tensor unit $\one$ are both the tensor unit), non-zero left and right modified traces as above exist and are unique up to scalars \cite[Sec.\,5.3]{GKPM2}.
Furthermore, these left/right modified traces are non-degenerate.
This significantly generalises earlier existence and uniqueness results, see 
e.g.\ \cite{GKPM1, GR, BBG}.

\medskip

We will focus on $\cat$ being $\hmod$, the category of finite-dimensional modules over a pivotal 
unimodular quasi-Hopf algebra.
In this situation we obtain existence, uniqueness and non-degeneracy of non-zero left/right modified traces without requiring $\field$ to be algebraically closed
(Theorem~\ref{thm:symcoint-is-modtr} below), 
and by using  methods different from those in~\cite{GKPM2}.

\section{Cointegrals for quasi-Hopf algebras}\label{Sec:qHopf}

In this section we recall some definitions and properties related to quasi-Hopf algebras that we shall need.
We start by giving our conventions for quasi-Hopf algebras, and then proceed to define integrals, cointegrals and symmetrised cointegrals for quasi-Hopf algebras (which have to be pivotal and unimodular in the latter case).

\medskip

Throughout this section, $H$ denotes a finite-dimensional quasi-Hopf algebra over $\field$.

\subsection*{Quasi-Hopf algebras}
    The antipode of $H$ is denoted by $S$ and the coassociator and its inverse by 
    $\coassQ$ and $\invCoassQ = \coassQ\inv$.
    The evaluation and coevaluation element are $\alphaQ,\betaQ \in H$, and without loss
    of generality we assume 
    \begin{align}
        \counit(\alphaQ) = 1 = \counit(\betaQ)\ .
    \end{align}
    Using sumless Sweedler notation for both (iterated) coproducts and elements in tensor
    powers of $H$, we write for example
    \begin{align}
        \Delta(h) &= h\sweedler{1} \otimes h\sweedler{2}
        , \quad
        (\Delta \otimes \id)(\Delta(h)) =
        h\sweedler{1,1}\otimes h\sweedler{1,2} \otimes h\sweedler{2} ,
        \notag \\
        \coassQ &= \coassQ_1 \otimes \coassQ_2 \otimes \coassQ_3 \in H^{\otimes 3}
    \end{align}

    Following the conventions from \cite[Sec.\,6]{FGR1} for the axioms,
    the comultiplication $\Delta:H\to H\otimes H$ satisfies
    \begin{align}
        \left( \Delta \otimes \id \right) (\Delta(h)) 
        \cdot \coassQ
        = 
        \coassQ \cdot 
        \left( \id \otimes \Delta \right) (\Delta(h)) \ ,
    \end{align}
    or in index notation
    \begin{align}
        h\sweedler{1,1} \coassQ_1 \otimes 
        h\sweedler{1,2}\coassQ_2  \otimes 
        h\sweedler{2} \coassQ_3 
        =
        \coassQ_1 h\sweedler{1}\otimes 
        \coassQ_2 h\sweedler{2,1} \otimes 
        \coassQ_3 h\sweedler{2,2}
    \end{align}
    for all $h\in H$.
    Note that this is the opposite of the convention that, for example,
    \cite{HN-integrals,BC2} are using.
    
The category $\cat \vcentcolon= \hmod$ of finite-dimensional $H$-modules 
is a monoidal category with associator
    \begin{align}
        \coassQ_{U,V,W}:U\otimes (V\otimes W) &\to (U\otimes V)\otimes W 
        \notag \\ \label{coassociator_categorical}  
        u\otimes v\otimes w &\mapsto \coassQ_1 u \otimes \coassQ_2 v \otimes \coassQ_3 w\ .
    \end{align}
    
If $x\in H\otimes H$, then we will frequently write 
\begin{align}
x_{21} \vcentcolon= \tau(x) = x_2\otimes x_1\in H\otimes H\ ,
\end{align}
where $\tau$ is the
tensor flip
of vector spaces. 
This notation is extended to higher tensor powers in the obvious way.

Following \cite{HN-integrals}, for $h,a\in H$, $f\in H^*$ we write
\begin{align}
	(h \rightharpoonup f)(a) = f ( a h )
	\quad , \qquad 
	&&
	f \rightharpoonup h = h\sweedler{1} f(h\sweedler{2}) \ ,
	\nonumber\\
	(f \leftharpoonup h)(a) = f ( h a )
	\quad , \qquad 
	&&
	h \leftharpoonup f = f(h\sweedler{1}) h\sweedler{2} \ .
\label{eq:hookActionDefined}
\end{align}

    One can also consider the \textsl{opposite} and the
    \textsl{coopposite} quasi-Hopf algebras $H\op$ (with opposite multiplication) 
    and $H\cop$ (with opposite comultiplication).
    These algebras become quasi-Hopf algebras after modifying the defining data
    according to $S\op = S\cop = S\inv$, $\coassQ\op = \invCoassQ$,
    $\alphaQ\op  = S\inv(\betaQ)$, $\betaQ\op = S\inv(\alphaQ)$,
    $\coassQ\cop = \invCoassQ_{321} = 
    \invCoassQ_3\otimes \invCoassQ_2\otimes \invCoassQ_1$,
    $\alphaQ\cop = S\inv(\alphaQ)$ and $\betaQ\cop = S\inv(\betaQ)$.

\subsection*{Dual modules}

    For $V\in\cat$ the left dual $\dualL{V}$ is the dual vector space $V^*$, with action
    \begin{align}
        \langle h.v^*, w \rangle = \langle v^*, S(h) w \rangle
        \quad \text{for all }h\in H, v^*\in V^*, w\in V,
    \end{align}
    and the (left) rigid structure on $\hmod$ is then defined as follows.
    The left evaluation are 
    \begin{alignat}{2}
        \evL_V : \dualL{V} \otimes V &\to \one \ , 
        \quad 
        \evL_V(v^*\otimes v) 
        &&= \langle v^*,~ \alphaQ v\rangle\ ,
    \end{alignat}
    respectively. 
    Using a basis $\{v_i\}$ of $V$ with corresponding dual basis $\{v^i\}$ we can
    write the left 
    coevaluation as
    \begin{alignat}{2}
        \coevL_V : \one &\to V \otimes \dualL{V} \ , 
        \quad 
        \coevL_V(1) 
        &&= \sum_i \betaQ v_i \otimes v^i
    \end{alignat}
    One can define right duals analogously in terms of $S^{-1}$, but we will not do this
    here as below we will work with pivotal quasi-Hopf algebras, where we will use the
    pivotal structure to define right duals.     
       
    \medskip

We extend the hook notation from \eqref{eq:hookActionDefined} to dual vector spaces,
so that for example the action on the left dual of the $H$-module $V$ could then be
written as $v^*\leftharpoonup S(h)$, since
\begin{align}
    \langle h.v^* \mid w \rangle
    =
    \langle v^* \mid S(h) w \rangle
    =
    \langle v^* \leftharpoonup S(h) \mid w \rangle
\end{align}
for all $h\in H$, $w\in V$, $v^*\in V^*$.

    The regular left and right action by an element $h\in H$ is denoted
    by $l_h$ and $r_h$, respectively, so that for all $a\in H$
    \begin{align}\label{eq:left-right-mult}
        l_h(a) = ha \qquad \text{and} \qquad r_h(a) = ah\ .
    \end{align}

    \medskip

\begin{figure}[tb]
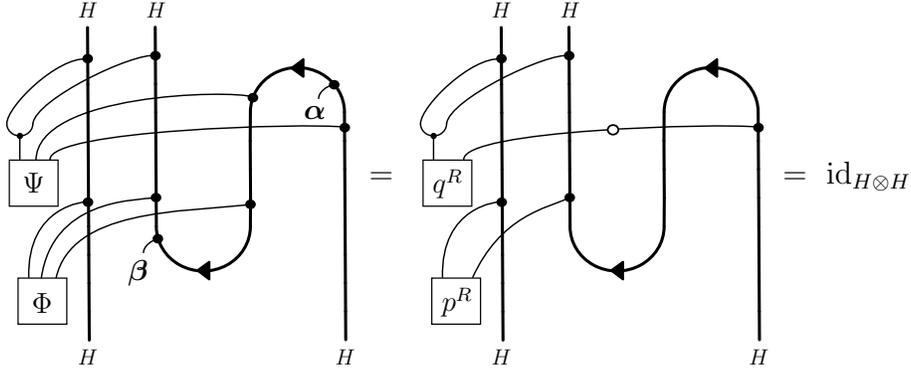

    \begin{align*}
		\scalebox{0.7}{
			\ipic{-0.5}{qR-interpretation_1}{4.3}
			\put(-172,-70){\scalebox{1.3}{$\coassQ$}}
			\put(-177,-5) {\scalebox{1.3}{$\invCoassQ$}}
			\put(-120,-52) {\scalebox{1.3}{$\betaQ$}}
			\put(-25,35) {\scalebox{1.3}{$\alphaQ$}}
			\put(-147,-098){$H$}
			\put(-147,+090){$H$}
			\put(-110,+090){$H$}
			\put(-008,-098){$H$}
		}
		\ = \
		\scalebox{0.7}{
			\ipic{-0.5}{qR-interpretation_2}{4.3}
			\put(-174,-69){\scalebox{1.3}{$\pR$}}
			\put(-179,-5) {\scalebox{1.3}{$\qR$}}
			\put(-147,-098){$H$}
			\put(-147,+090){$H$}
			\put(-110,+090){$H$}
			\put(-008,-098){$H$}
		}
		\ = \
		\id_{H\otimes H}
    \end{align*}
	\caption{The equalities in \eqref{eq:identities qpL} follow from the zig-zag identities for duals in $\hmod$, where both sides are tensored with the identity. 
		We show this above for first equality in \eqref{eq:identities qpL}, where we have written out $\id_H$ times the zig-zag identity for $H$ in $\hmod$ as a string diagram in $\Vect$.}
	\label{fig_qRpR_relation}
\end{figure}

We will make use of the special elements $\qR,\pR,\qL,\pL \in H\otimes H$,
which have already appeared in \cite{Dr}, and have been further 
used in \cite{HN-doubles} and subsequent papers concerning quasi-Hopf
algebras.
They are defined as
\begin{alignat}{2}
	\notag
	&\qR\vcentcolon= \invCoassQ_1\otimes S\inv(\alphaQ \invCoassQ_3)\invCoassQ_2 \ ,
	\qquad 
	&& \pR \vcentcolon= \coassQ_1\otimes \coassQ_2 \betaQ S(\coassQ_3) \ , \\
	\label{eq:q,\pL}
	&\qL\vcentcolon= S(\coassQ_1) \alphaQ \coassQ_2 \otimes \coassQ_3 \ , \qquad
	&& \pL \vcentcolon= \invCoassQ_2 S\inv(\invCoassQ_1 \betaQ) \otimes \invCoassQ_3 \ ,
\end{alignat}
and satisfy the identities
\begin{alignat}{2}
	\notag
	\Delta(\qR_1) \pR [\oneQ\otimes S(\qR_2)] &= \oneQ\otimes \oneQ \ , \qquad 
	& [\oneQ\otimes S\inv(\pR_2)] \qR \Delta(\pR_1) &=\oneQ\otimes \oneQ \ , \\
	\label{eq:identities qpL}
	\Delta(\qL_2) \pL [S\inv(\qL_1)\otimes \oneQ] &= \oneQ\otimes \oneQ \ , \qquad 
	& [S(\pL_1)\otimes \oneQ] \qL \Delta(\pL_2) &=\oneQ\otimes \oneQ\ 
\end{alignat}
and
\begin{align}
    (\oneQ \otimes S\inv(a\sweedler{2}))
    \qR \Delta(a\sweedler{1}) 
    &=
    (a\otimes \oneQ)  \qR \ ,
    \notag \\ 
    \Delta(a\sweedler{1})\pR  (\one \otimes S(a\sweedler{2}))
    &= \pR  (a\otimes \oneQ)\ .
    \label{rel_pr_coproduct}
\end{align}
These identities are most easily understood using the standard graphical calculus\footnote{
	Our string diagrams are read from bottom to top. The further conventions we use for the graphical notation are detailed e.g.\ in \cite[Sec.\,2.1,\,2.2,\,6.1]{FGR1}.\label{fn:stringconvention}}
 for
    rigid monoidal categories, see for example in Figure~\ref{fig_qRpR_relation}.
    More about this may be found in \cite[Sec.\,2]{HN-doubles}.

\subsection*{Pivotal quasi-Hopf algebras}

To state the definition of a pivotal element, we need to recall the \textsl{Drinfeld twist}, which is an invertible 
element $\Dt\in H\otimes H$ such that
\begin{align}\label{eq_drinfeld_twist_comult}
\Dt  \Delta(S(a))  \Dt\inv = (S\otimes S)\left( \Delta\cop(a) \right), 
\end{align}
for all $a\in H$.
It corresponds to the canonical natural isomorphism 
\begin{align}
\gamma_{W,V}: \dualL{W}\otimes \dualL{V} \xrightarrow{\sim} \dualL{(V\otimes W)}
\end{align}
of $H$-modules, via 
\begin{align}
\left( \gamma_{W,V}( \varphi\otimes \psi) \right) (v\otimes w)
= (\psi\otimes\varphi) (\Dt_1v \otimes \Dt_2w)\ ,
\end{align}
for all $v\in V, w\in W, \varphi\in W^*, \psi\in V^*$.
For more details see e.g.\ \cite[Sec.\,6.2]{FGR1}.
We will not
need the explicit description of $\Dt$ here, but let us remark that in 
our conventions we have 
\begin{align}
(\counit \otimes \id) (\Dt) = \oneQ = (\id \otimes \counit)(\Dt)\ .
\end{align}
We can now state
(see \cite[Def.\,4.1]{BBG} for the Hopf case and \cite[Def.\,3.2]{BT2} for the quasi-Hopf case, where the name ``sovereign'' is used instead).

\begin{mydef}
$H$ is called \textsl{pivotal} 
	if there is an invertible element
	$\pivotQ \in H$, called the \textsl{pivot}, satisfying
	\begin{align}\label{def_coproduct_pivot}
	\Delta(\pivotQ) = \Dt\inv \cdot (S\otimes S)(\Dt_{21}) \cdot (\pivotQ \otimes \pivotQ)   \ ,
	\end{align}
	and such that $S^2(h)=\pivotQ h \pivotQ^{-1}$, for all $h\in H$. 
\end{mydef}

The pivot 
satisfies
\cite[Prop.\,3.12]{BT2}
\begin{align}
\counit(\pivotQ) = 1 
\qquad \text{and} \qquad
S(\pivotQ) = \pivotQ\inv\ .
\end{align}
We remark that the second property stems from the more general fact that in every pivotal category we have the identity $\delta_{\dualL{V}}\inv = \dualL{\left(\delta_V\right)}$,
see e.g.\ \cite[Lem.\,4.11]{Selinger}.

\medskip

\begin{rmrk}~ 
	\begin{enumerate}\setlength{\leftskip}{-1em}
		\item
		A pivot is not necessarily unique. 
		For example, if 
$z$ is central,
		invertible and satisfies $\Delta(z)=z\otimes z$,
		then $z\pivotQ$ is also a pivot.
		We will indicate our choice of pivot by saying that
		$(H,\pivotQ)$ is a pivotal quasi-Hopf algebra.

		\item
		If $(H,\pivotQ)$ is pivotal,
		then indeed $\hmod$ is pivotal. 
		The pivotal structure is the monoidal natural isomorphism with components
		\begin{align}\label{pivotal_structure_hmod}
		\delta_V : V \to \ddualL{V},
		\qquad
		\delta_V = \delta^{\Vect}_V \circ l_{\pivotQ}\ ,\qquad 
		V\in \hmod\ ,
		\end{align}
		where $\delta^{\Vect}$ is the canonical pivotal structure of 
		underlying vector spaces.
		In fact, the set of pivotal structures on $\hmod$ is in bijective correspondence 
		with the set of pivots for $H$.
		For a proof see e.g.\ \cite[Prop.\,3.2]{BCT}, and note 
		that our pivot is their inverse pivot.
	\end{enumerate}
\end{rmrk}

\label{right_rigid_structure}
Using the pivotal structure we define \textsl{right duals} in the standard way. 
Namely, as an object the right dual $\dualR{V}$ of an object $V$ is just the left dual
$\dualL{V}$.
The right evaluation and coevaluation are then defined as
\begin{align}
\evR_V \vcentcolon= 
\evL_{\dualL{V}} \circ \left( \delta_V \otimes \id_{\dualL{V}} \right) 
\ , \qquad
\coevR_V \vcentcolon= 
\left( \id_{\dualL{V}} \otimes \delta_V\inv \right) \circ \coevL_{\dualL{V}}\ .
\end{align}
Explicitly, for example,
\begin{align}\label{eq:right-eval-explicit}
\evR_V(v \otimes w^*) &= 
\big\langle  \, \pivotQ v \, , \,  
	w^* \leftharpoonup S(\alphaQ) \, \big\rangle
\notag \\
	&= \big\langle  \, S(\alphaQ)\pivotQ v \, , \,  w^*  \, \big\rangle
	= \big\langle  \, \pivotQ S^{-1}(\alphaQ) v \, , \,  w^*  \, \big\rangle \ ,
\end{align}
where in the last step we used $S(h) \pivotQ = \pivotQ S^{-1}(h)$ for $h \in H$.

\subsection*{Integrals and cointegrals}

A \textsl{left integral}\footnote{
	In \cite{BBG} this is called a left cointegral, and what we call cointegral here is called integral there (all in the case of Hopf algebras).
We follow the conventions in e.g.\ \cite{HN-integrals,BC1,BC2}, from where we take the definition of cointegrals for quasi-Hopf algebras. 
}
for $H$ is an element $\Lambda$ of $H$, such that 
$h\Lambda = \counit(h)\Lambda$ for all $h\in H$.
One similarly defines a \textsl{right integral} for $H$ to be a left integral for $H\op$.
The spaces of left and right integrals are always one-dimensional for 
a finite-dimensional quasi-Hopf algebra, see for example \cite[Sec.\,2]{BC1}.

The difference between left and right integrals is measured by the \textsl{modulus} of $H$.
This is the unique algebra morphism 
$\modulus:H\to \field$ such that for any left (resp.\ right) integral $\Lambda$ (resp.\ $\Lambda'$)
we have $\Lambda h = \modulus(h) \Lambda$ (resp.\ $h \Lambda' = \modulus\inv(h)\Lambda'$).
Left and right integrals coincide if and only if the modulus of $H$ is given by the counit
$\counit$, and in that case we say that $H$ is \textsl{unimodular}.

\begin{rmrk}
	The category $\hmod$ is unimodular
	if and only if $H$ is unimodular, see \cite[Rem.\,3.2]{ENO}.
\end{rmrk}

There is also the `dual' notion of \textsl{cointegrals} for quasi-Hopf algebras, proposed 
in \cite{HN-integrals} and further studied in 
\cite{BC1,BC2}. 
To state their definition we need the elements $\elU,\elV\in H\otimes H$, given by
\begin{align}
\elU &\vcentcolon= \Dt\inv  (S\otimes S)(\qR_{21})\ ,
\notag \\
\label{eq_HN_V}
\elV &\vcentcolon= (S\inv \otimes S\inv)\left( \Dt_{21}  \pR_{21}\right)\ .
\end{align}

We also set
\begin{align}
    \label{eq:elsmallu-defined}
    \elu = (\modulus \otimes S^2)(\elV).
\end{align}

\begin{rmrk}
	It is easy to see that 
	\begin{align}
	(\qR)\cop = \qL_{21},~~
	(\pR)\cop = \pL_{21},~~
	(\qL)\cop = \qR_{21},~~\text{and}~~
	(\pL)\cop = \pR_{21}\ .
	\end{align}
	Likewise one finds $\Dt\cop = (S\inv \otimes S\inv)(\Dt)$, cf.\ 
	\cite[Sec.\,3]{BC2}, and therefore
	\begin{align}\notag
        \elU\cop &\vcentcolon= (S\inv\otimes S\inv)(\qL \Dt\inv)\ ,\\
        \elV\cop &\vcentcolon= (S \otimes S)\left( \pL \right) \Dt_{21}\ ,\notag \\
        \elu\cop & = (\modulus \otimes S^{-2})(\elV\cop)
	\end{align}
	If $H$ is pivotal, then $\pivotQ\cop = \pivotQ\inv$.
\end{rmrk}

Paraphrasing \cite[Eq.\ (3.6) and Def.\,3.4]{BC2} we now define: 

\begin{mydef}\label{def:cointegrals}
	Let $\modulus$ be the modulus of $H$.
    \begin{itemize}
        \item
        A \textsl{left cointegral} for $H$ is an element $\cointL\in H^*$ satisfying
        \begin{align}\label{def_leftCointegral}
            (\id \otimes \cointL) (\elV \Delta(h)  \elU) =
            \modulus(\coassQ_1) \cointL(h S(\coassQ_2)) \coassQ_3
        \end{align}
        for all $h\in H$.
    \item
        A \textsl{right cointegral} for $H$ is a left cointegral for $H\cop$.
        In full detail  this means that $\cointR \in H^*$ is a right cointegral for $H$
        if and only if
        \begin{align}\label{def_rightCointegral}
            (\id \otimes \cointR) \big(\elV\cop \Delta\cop(h)  \elU\cop \big) =
            \modulus(\invCoassQ_3) \cointR(h S\inv(\invCoassQ_2)) \invCoassQ_1\ ,
        \end{align}
        for all $h\in H$.
    \end{itemize}
\end{mydef}

    Cointegrals for quasi-Hopf algebras have a categorical interpretation, see
    \cite{SS,BGR2}.
    Cointegrals satisfy a number of properties, and for convenience we collect the ones we
    will need in the following proposition.

    \begin{prop}\label{prop:coint-properties}
	Let $\modulus$ be the modulus of $H$.
        \begin{enumerate}
            \item 
                Left (resp.\ right) cointegrals exist and are unique up to scalar.
            \item 
                Non-zero left (resp.\ right) cointegrals are non-degenerate forms on $H$.
            \item 
                Let $\cointL$ be a left cointegral.
                Then, for all $a,b\in H$
                \begin{align}\label{eq:LNakayamageneral}
                    \cointL(S\inv(a)b) 
                    = 
                    \cointL(bS(a \leftharpoonup \modulus))
                \end{align}
            \item 
                Let $\cointR$ be a right cointegral. Then, for all $a,b\in H$
                \begin{align}\label{eq:RNakayamageneral}
                    \cointR(S(a)b) 
                    = 
                    \cointR\big(
                    b S\inv (\modulus \rightharpoonup a)
                    \big) \ .
                \end{align}
            \item 
                Let $\cointR$ be a right cointegral.
                Then
                \begin{align}\label{eq:leftCointFromRight}
                    \coint = (\cointR \leftharpoonup \elu) \circ S
                \end{align}
                is a left cointegral.
                \item
                    Let $\cointL$ be a left cointegral.
                    Then
                    \begin{align}\label{eq:rightCointFromLeft}
                        \coint = (\cointL \leftharpoonup \elu\cop) \circ S\inv
                    \end{align}
                    is a right cointegral.
        \end{enumerate}
    \end{prop}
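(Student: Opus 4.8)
The plan is to handle the six parts in two groups, using the passage to $H\cop$ to avoid duplicating work. Parts (1) and (2) are the classical existence, uniqueness and non-degeneracy statements for cointegrals of a finite-dimensional quasi-Hopf algebra, and I would simply cite them from \cite{HN-integrals,BC1,BC2}, where the space of left cointegrals is shown to be one-dimensional and a non-zero left cointegral is shown to be a non-degenerate form on $H$. The right versions are immediate, since by Definition~\ref{def:cointegrals} a right cointegral for $H$ is exactly a left cointegral for $H\cop$, whose structure data were recorded above. Moreover, since the right Nakayama relation (4) is obtained from (3), and the conversion formula (6) from (5), by replacing $H$ with $H\cop$ and substituting the transformed structure elements (in particular $\elu\cop = (\modulus \otimes S^{-2})(\elV\cop)$ from the remark preceding the proposition, together with $S\cop = S\inv$ and the flip of the coproduct), it suffices to prove (3) and (5).

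For the Nakayama relation (3) I would start from the defining equation \eqref{def_leftCointegral} of $\cointL$. Writing $a\leftharpoonup\modulus = \modulus(a\sweedler{1})\,a\sweedler{2}$ as in \eqref{eq:hookActionDefined}, the idea is to insert $S\inv(a)$ into the first argument and then transport it across the cointegral onto $b$. Concretely, I would multiply the defining relation by suitable legs of $\qR$ and $\pR$ and collapse the resulting strings using the zig-zag identities \eqref{eq:identities qpL} and the coproduct-intertwining relations \eqref{rel_pr_coproduct}; this is precisely what the elements $\elU,\elV$ of \eqref{eq_HN_V} are engineered to do. The modulus enters the right-hand side through $\modulus(\coassQ_1)$, and the point of the computation is to check that, once the strings are removed, the surviving coassociator legs reassemble into the single correction $S(a\leftharpoonup\modulus)$.

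For the conversion formula (5) I would verify directly that $\coint(a) = \cointR(\elu\, S(a))$, which is $(\cointR \leftharpoonup \elu)\circ S$ by \eqref{eq:hookActionDefined}, satisfies \eqref{def_leftCointegral}. Substituting this definition into the left-hand side of \eqref{def_leftCointegral} and using that $S$ is an algebra anti-homomorphism together with the twist relation \eqref{eq_drinfeld_twist_comult}, one rewrites everything in terms of $S\inv$ and $\Delta\cop$. The factor $S^2$ built into $\elu = (\modulus \otimes S^2)(\elV)$ in \eqref{eq:elsmallu-defined} is exactly the correction needed to turn the $S$-data of the left defining relation into the $S\inv$-data appearing in the right defining relation \eqref{def_rightCointegral}. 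Matching the two sides then reduces to \eqref{def_rightCointegral} for $\cointR$, possibly after inserting one more zig-zag via \eqref{eq:identities qpL}.

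The main obstacle throughout is the bookkeeping forced by non-coassociativity: the coassociator $\coassQ$ and its inverse must be carried through the antipode and through the $\qR,\pR,\elU,\elV$ strings, and one has to check that the modulus — which is only an algebra map and is evaluated on individual legs of $\coassQ$ — lands on the correct leg after all reassociations. A misplaced coassociator factor, or a mismatch between $S$ and $S\inv$ on a single leg, would break the identity, so the verification demands careful use of the pentagon axiom and of the defining identities \eqref{eq:identities qpL} and \eqref{rel_pr_coproduct}. I do not expect any conceptually new ingredient beyond these manipulations.
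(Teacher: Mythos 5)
Your proposal is consistent with the paper, whose entire proof of this proposition is a citation: parts (1)--(4) are taken from \cite[Sec.\,4,\,5]{HN-integrals} and parts (5)--(6) from \cite[Prop.~4.3]{BC2}. Your handling of (1)--(2) therefore matches the paper exactly, and your reduction of (4) to (3) and of (6) to (5) by passing to $H\cop$ is sound and is precisely how the paper's conventions are arranged (a right cointegral is by definition a left cointegral for $H\cop$, and the transformed structure data $(\qR)\cop = \qL_{21}$, $\elu\cop$, etc.\ are recorded in the remark preceding the proposition). Where you diverge is in parts (3) and (5): you propose to prove these by direct computation, whereas the paper does not prove them at all. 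Your sketches identify the right ingredients --- the defining equation \eqref{def_leftCointegral}, the zig-zag identities \eqref{eq:identities qpL}, the intertwining relations \eqref{rel_pr_coproduct}, and the role of $S^2$ inside $\elu$ --- but they are strategy outlines rather than arguments: the crucial steps (that the coassociator legs ``reassemble'' into $S(a\leftharpoonup\modulus)$ in (3), and that the two sides of \eqref{def_rightCointegral} actually match in (5)) are asserted, not verified, and these verifications are genuinely lengthy in \cite{HN-integrals} and \cite{BC2}. Relative to the paper this is not a defect, since the paper also defers to those references; but your write-up should not be read as containing proofs of (3) and (5) --- either carry out the computations in full or, as the paper does, cite them.
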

    \begin{proof}
        The proofs of \textit{(1)}--\textit{(4)} can be found in
        \cite[Sec.\,4,\,5]{HN-integrals}.
        The last two points are \cite[Prop.~4.3]{BC2}.
    \end{proof}

The preceding proposition allows us to give the following equivalent characterisation of
cointegrals.\footnote{We thank the anonymous referee for explaining to us the improved result in Lemma~\ref{prop:NewVersion3.7}, which in the
originally submitted manuscript was only formulated in the unimodular case.}\textsuperscript{,}\footnote{
The shifted left cointegral $\cointSymL$  also appears in \cite[Sec.\,6.4]{SS} in
    relation to \emph{$\modulus$-twisted module traces}.
}
\begin{lemma}\label{prop:NewVersion3.7}
    Suppose that $H$ is pivotal, Let $\coint \in H^*$ and set 
    \begin{align}
        \cointSymR = \coint \leftharpoonup \elu \pivotQ
        \quad \text{and} \quad
        \cointSymL = \coint \leftharpoonup \elu\cop \pivotQ\inv .
    \end{align}
    Then
    \begin{enumerate}
        \item 
            $\coint$ is a left cointegral if and only if
            \begin{align}
        \label{eq:left-sym-coint-eq}
                (\id \otimes \cointSymL)
                \left( \qL \Delta(h) \pL \right)
                =
                \modulus(\invCoassQ_{3}) \cointSymL(\invCoassQ_2 h)\cdot 
                \pivotQ S\inv(\invCoassQ_1)
            \end{align}
        \item 
            $\coint$ is a right cointegral if and only if
            \begin{align}
                (\cointSymR \otimes \id)
                \left( \qR \Delta(h) \pR \right)
                =
                \modulus(\coassQ_{1}) \cointSymR(\coassQ_2 h)\cdot 
                \pivotQ\inv S(\coassQ_3)
            \end{align}
    \end{enumerate}
\end{lemma}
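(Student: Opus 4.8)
The plan is to prove part (2) by reducing it to part (1) applied to the coopposite algebra, and to prove part (1) by transforming the defining equation \eqref{def_leftCointegral} into the symmetrised form \eqref{eq:left-sym-coint-eq} through a chain of reversible manipulations.

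First I would set up the cop-reduction. Since a right cointegral for $H$ is by definition a left cointegral for $H\cop$, it suffices to apply part (1) to $H\cop$ and translate back using the dictionary from the Remark preceding Definition~\ref{def:cointegrals}, namely $(\qL)\cop = \qR_{21}$, $(\pL)\cop = \pR_{21}$, $\pivotQ\cop = \pivotQ\inv$, $S\cop = S\inv$ and $\coassQ\cop = \invCoassQ_{321}$, whence $\invCoassQ^{H\cop} = \coassQ_{321}$. The nontrivial inputs are: (i) $H$ and $H\cop$ share the same multiplication and counit, so they have the same integrals and therefore $\modulus^{H\cop} = \modulus$; and (ii) the shifting elements combine so that $(\cointSymL)^{H\cop} = \cointSymR$. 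Using that $\tau$ is an algebra map on $H\otimes H$, one has $\qR_{21}\Delta\cop(h)\pR_{21} = \tau(\qR\Delta(h)\pR)$ and $(\id\otimes\cointSymR)\circ\tau = \cointSymR\otimes\id$, which converts the left-hand side of \eqref{eq:left-sym-coint-eq} (written for $H\cop$) into that of part (2); reindexing $\invCoassQ^{H\cop}=\coassQ_{321}$ converts the right-hand side. This shows part (2) $\Leftrightarrow$ part (1) for $H\cop$, so I am left to prove (1).

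For part (1) I would argue that \eqref{def_leftCointegral} and \eqref{eq:left-sym-coint-eq} are equivalent conditions on $\coint$, by exhibiting a reversible chain of equalities between them; since every step (multiplication by invertible elements, application of the bijective maps $S^{\pm1}$, and use of the twist relation) is invertible, both implications follow simultaneously. Substituting $\cointSymL(a)=\coint(\elu\cop\pivotQ\inv a)$ together with the explicit expressions $\elU\cop = (S\inv\otimes S\inv)(\qL\Dt\inv)$, $\elV\cop = (S\otimes S)(\pL)\Dt_{21}$ and $\elu\cop = (\modulus\otimes S^{-2})(\elV\cop)$, I would rewrite the $\qL,\pL$ of \eqref{eq:left-sym-coint-eq} in terms of $\elU,\elV,\Dt$ and then commute the twist $\Dt$ and the maps $S\otimes S$ past $\Delta(h)$ using the Drinfeld-twist relation \eqref{eq_drinfeld_twist_comult} and the coproduct of the pivot \eqref{def_coproduct_pivot}. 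The crossing between the right-handed data $\qR,\pR$ entering \eqref{def_leftCointegral} through $\elU,\elV$ and the left-handed data $\qL,\pL$ entering \eqref{eq:left-sym-coint-eq} is precisely what the pivot mediates, via $S^2(h)=\pivotQ h\pivotQ\inv$ and $S(\pivotQ)=\pivotQ\inv$; I would also invoke the Nakayama-type identity of Proposition~\ref{prop:coint-properties}\,(3) to absorb the residual antipode and modulus factors and to match the right-hand sides $\modulus(\coassQ_1)\coint(hS(\coassQ_2))\coassQ_3$ and $\modulus(\invCoassQ_3)\cointSymL(\invCoassQ_2 h)\,\pivotQ S\inv(\invCoassQ_1)$.

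The main obstacle I expect is the bookkeeping of the coassociator: collapsing the composite expressions requires careful use of the mixed relations \eqref{eq:identities qpL} and \eqref{rel_pr_coproduct} for $\qR,\pR,\qL,\pL$, and control of the interaction between the modulus $\modulus$ hidden inside $\elu\cop$ and the explicit scalars $\modulus(\coassQ_1)$, $\modulus(\invCoassQ_3)$. I would carry out this step in the graphical calculus for $\hmod$, as in Figure~\ref{fig_qRpR_relation}, where the zig-zag identities and the pivotal structure make the cancellations transparent and the passage between left and right duals is manifest; the algebraic identities quoted above are then just the string-diagram moves written out.
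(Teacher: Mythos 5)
Your reduction of part (2) to part (1) via $H\cop$ is sound: the dictionary you quote is correct, $H$ and $H\cop$ have the same multiplication and counit and hence the same integrals and modulus, and $(\cointSymL)^{H\cop}=\coint\leftharpoonup\elu\,\pivotQ=\cointSymR$ closes that step. The paper instead proves (2) directly and declares (1) analogous, so this reorganisation is legitimate. The problem is your proof of part (1), which is where all the content now sits.

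The gap is the passage between right-handed and left-handed data. The defining equation \eqref{def_leftCointegral} is built from $\elV=(S\inv\otimes S\inv)(\Dt_{21}\pR_{21})$ and $\elU=\Dt\inv(S\otimes S)(\qR_{21})$, i.e.\ from $\qR,\pR$, whereas the target \eqref{eq:left-sym-coint-eq} is built from $\qL,\pL$. You propose to ``rewrite the $\qL,\pL$ of \eqref{eq:left-sym-coint-eq} in terms of $\elU,\elV,\Dt$'', but $\qL,\pL$ are encoded in $\elU\cop,\elV\cop$, which are \emph{different} elements of $H\otimes H$ from $\elU,\elV$; and the claim that the pivot mediates the crossing between $\qR,\pR$ and $\qL,\pL$ does not hold. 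The identity $S^2=\mathrm{Ad}_{\pivotQ}$ only trades $S$ for $S\inv$ up to conjugation; it cannot turn, say, $\qR_1=\invCoassQ_1$ (no antipode, no $\alphaQ$ in the first leg) into $\qL_1=S(\coassQ_1)\alphaQ\coassQ_2$. Neither the Drinfeld-twist relation \eqref{eq_drinfeld_twist_comult} nor the Nakayama identity of Proposition~\ref{prop:coint-properties}\,(3) (which only rearranges the argument of $\coint$ inside the second tensor leg) supplies this conversion. The missing ingredient is the left--right cointegral correspondence, Proposition~\ref{prop:coint-properties}\,(5)--(6), which is exactly how the paper bridges the handedness gap: for part (1) one passes from the left cointegral $\cointL$ to the right cointegral $\cointR=(\cointL\leftharpoonup\elu\cop)\circ S\inv$, observes $\cointSymL=(\pivotQ\rightharpoonup\cointR)\circ S$, and evaluates the \emph{right} cointegral equation \eqref{def_rightCointegral} --- whose $\elU\cop,\elV\cop$ carry precisely the $\qL,\pL$ data --- at the shifted argument $S(h)\pivotQ$; only then do the twist and pivot manipulations you describe go through. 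Without invoking (5)/(6) (or reproving it), your chain of ``reversible equalities'' cannot close.
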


\begin{proof}
    We will prove the second part, the first statement is completely analogous.
    Let $\cointR\in H^*$ be a right cointegral.
    By Proposition \ref{prop:coint-properties} $(5)$, we thus have
    \begin{align}
        \cointSymR 
        = \cointR \leftharpoonup \elu \pivotQ
            = (\cointL \circ S\inv) \leftharpoonup \pivotQ
        = (\pivotQ\inv \rightharpoonup \cointL) \circ S\inv.
    \end{align}
    for a left cointegral $\cointL$. Using this equality and evaluating
 the left cointegral equation \eqref{def_leftCointegral} on
    $S\inv(h)\pivotQ\inv$ for $h\in H$ gives
    \begin{align}
        \label{eq:Proof_improved_3.7_first}
        (\id \otimes \cointL) (\elV \Delta(S\inv(h)\pivotQ\inv)  \elU) =
        \modulus(\coassQ_1) \cointSymR(\coassQ_2 h) \coassQ_3.
    \end{align}
    We have
    \begin{align}
        \Delta(\pivotQ\inv) \elU 
        &= (\pivotQ\inv \otimes \pivotQ\inv) (S\otimes S)(\qR_{21} \Dt_{21}\inv)
        \notag \\
        &= (S\inv\otimes S\inv)(\qR_{21} \Dt_{21}\inv) (\pivotQ\inv \otimes \pivotQ\inv).
    \end{align}
    Using $\elV=(S\inv\otimes S\inv)(\Dt_{21}\pR_{21})$ and
    \begin{align}
        \Delta(S\inv(h))
        &=
        (S\inv \otimes S\inv)(\Dt_{21} \Delta\cop(h) \Dt_{21}\inv)
    \end{align}
    we immediately simplify \eqref{eq:Proof_improved_3.7_first} to 
    \begin{align}
        \modulus(\coassQ_1) \cointSymR(\coassQ_2 h) \coassQ_3
        &=
        (\id \otimes \cointL)
        \left[
            (S\inv \otimes S\inv)
            \left( \qR_{21} \Delta\cop(h) \pR_{21} \right)
            \cdot (\pivotQ\inv \otimes \pivotQ\inv)
        \right]
        \notag \\
        &=
        ((r_{\pivotQ\inv} \circ S\inv) \otimes \cointSymR)
        \left( 
            \qR_{21} \Delta\cop(h) \pR_{21}
        \right)
        \notag \\
        &=
        ((S\inv \circ l_{\pivotQ\inv}) \otimes \cointSymR)
        \left( 
            \qR_{21} \Delta\cop(h) \pR_{21}
        \right)
    \end{align}
    Then, applying $S$ on both sides and multiplying with $\pivotQ\inv$ on the left gives
    \begin{align}
        (\cointSymR \otimes \id)
        \left( \qR \Delta(h) \pR \right)
        =
        \modulus(\coassQ_{1}) \cointSymR(\coassQ_2 h)\cdot 
        \pivotQ\inv S(\coassQ_3), 
    \end{align}
    as desired.
\end{proof}

We note that 
\begin{align}
    \cointSymL(ab) = \cointSymL((\modulus \rightharpoonup b) a)
    \quad \text{and} \quad
    \cointSymR(ab) = \cointSymR((b \leftharpoonup \modulus) a).
\end{align}
To see the first identity use Proposition~\ref{prop:coint-properties} (4) and (6) to
obtain
\begin{align}\label{eq:symmetricityPropsOfShiftedCoints}
    \cointSymL(ab) 
    &= \cointR(S(ab)\pivotQ) 
    = \cointR(S(a)\pivotQ S\inv(\modulus \rightharpoonup b))
    \notag \\
    &= \cointR( S(a)S(\modulus \rightharpoonup b) \pivotQ) 
    = \cointSymL( (\modulus \rightharpoonup b) a ).
\end{align}
The second identity can be seen using points (3) and (5) of the same proposition.

\medskip

\subsection*{Symmetrised cointegrals}
For the rest of this section we assume:
$$(H,\pivotQ) \ \text{is pivotal and unimodular} \ . $$

Note that in this case the elements $\elu$ and $\elu\cop$ from 
Proposition~\ref{prop:coint-properties} 
are both equal to~$\oneQ$.
The following immediate corollary to Proposition~\ref{prop:NewVersion3.7} will be
useful when comparing to modified traces.

\begin{cor}\label{coro_symmetrized_simple_condition}
    Let $\coint\in H^*$. 
    Then
    \begin{enumerate}
        \item 
            $\coint$ is a left cointegral if and only if
            \begin{align}\label{eq_symmetrized_left_cointegral_version_of_modTr}
                \cointSymL(h) ~ \oneQ = 
                \left( \pivotQ\inv \otimes \cointSymL\right) (\qL \Delta(h)
                \pL)
            \end{align}
            for all $h\in H$.
        \item 
            $\coint$ is a right cointegral if and only if
            \begin{align}\label{eq_symmetrized_right_cointegral_version_of_modTr}
                \cointSymR(h) ~ \oneQ = 
                \left( \cointSymR \otimes \pivotQ \right) (\qR \Delta(h) \pR)
            \end{align}
            for all $h\in H$.
    \end{enumerate}
\end{cor}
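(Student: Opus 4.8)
The plan is to read off both statements as an immediate specialisation of Proposition~\ref{prop:NewVersion3.7} to the unimodular case, in which the modulus $\modulus$ equals the counit $\counit$. I will carry out part (2) and note that part (1) is strictly parallel, with the roles of the two tensor legs (and of $\pivotQ$ versus $\pivotQ\inv$) interchanged.

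Starting from Proposition~\ref{prop:NewVersion3.7}\,(2),
\begin{align*}
(\cointSymR \otimes \id)\left( \qR \Delta(h) \pR \right)
=
\modulus(\coassQ_{1})\, \cointSymR(\coassQ_2 h)\cdot \pivotQ\inv S(\coassQ_3),
\end{align*}
the first step is to substitute $\modulus = \counit$, so that the counit now sits on the first leg of the coassociator. The decisive simplification is then the counit condition $(\counit \otimes \id \otimes \id)(\coassQ) = \oneQ \otimes \oneQ$, i.e.\ $\sum \counit(\coassQ_1)\,\coassQ_2 \otimes \coassQ_3 = \oneQ \otimes \oneQ$, which collapses the right-hand side to $\cointSymR(h)\,\pivotQ\inv S(\oneQ) = \cointSymR(h)\,\pivotQ\inv$. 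This produces the intermediate identity $(\cointSymR \otimes \id)(\qR \Delta(h)\pR) = \cointSymR(h)\,\pivotQ\inv$.

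It remains only to absorb the residual $\pivotQ\inv$. Interpreting $(\cointSymR \otimes \pivotQ)$ as applying $\cointSymR$ to the first leg and left-multiplying the second leg by $\pivotQ$, one has $(\cointSymR \otimes \pivotQ)(x) = \pivotQ\cdot(\cointSymR \otimes \id)(x)$, so the residual factor is cancelled and one is left with $\pivotQ\cdot\cointSymR(h)\,\pivotQ\inv = \cointSymR(h)\,\oneQ$, which is exactly \eqref{eq_symmetrized_right_cointegral_version_of_modTr}. For part (1) one runs the same argument on Proposition~\ref{prop:NewVersion3.7}\,(1): now $\modulus(\invCoassQ_3)$ becomes $\counit(\invCoassQ_3)$ and is removed using $(\id \otimes \id \otimes \counit)(\invCoassQ) = \oneQ \otimes \oneQ$, reducing the right-hand side to $\cointSymL(h)\,\pivotQ$, and the prefactor $\pivotQ\inv$ on the free (first) leg then cancels it to give $\cointSymL(h)\,\oneQ$. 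Since every manipulation is an equivalence, the resulting identities hold if and only if $\coint$ is the corresponding cointegral.

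The calculation is genuinely short, so the only point that needs care is the use of the \emph{outer} counit conditions $(\counit \otimes \id \otimes \id)(\coassQ) = \oneQ \otimes \oneQ$ and $(\id \otimes \id \otimes \counit)(\invCoassQ) = \oneQ \otimes \oneQ$. These are not the normalisation axiom imposed directly on $\coassQ$ (which concerns the middle leg), but are consequences of the pentagon identity together with the middle counit condition; they transfer to $\invCoassQ = \coassQ\inv$ because collapsing an outer leg by $\counit$ is an algebra map and hence sends inverses to inverses. The remaining bookkeeping — tracking on which leg and from which side the pivot acts, so that $\pivotQ$ and $\pivotQ\inv$ cancel correctly — is where the left/right asymmetry lives and where I would double-check the signs.
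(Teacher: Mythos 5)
Your proposal is correct and matches the paper's intent exactly: the paper states this as an ``immediate corollary'' of Lemma~\ref{prop:NewVersion3.7} with no written proof, and your argument---setting $\modulus=\counit$ in the unimodular case, collapsing the coassociator legs via the outer counit conditions, and cancelling the residual $\pivotQ^{\pm 1}$ against the pivot acting on the free tensor leg---is precisely the specialisation the authors have in mind. The care you take with the outer counit conditions (derived from the pentagon axiom and transferred to $\invCoassQ$ since applying $\counit$ to one leg is an algebra map) is a correct filling-in of the omitted details.
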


Following \cite[Sec.\,4]{BBG}, we call $\cointSymL$ and $\cointSymR$
the \textsl{symmetrised left} and \textsl{right cointegral}, respectively.
The adjective ``symmetrised'' is justified by the following corollary, which follows
from Proposition \ref{prop:coint-properties} (2), equation
\eqref{eq:symmetricityPropsOfShiftedCoints}, and the fact that here we assume $H$ to
be unimodular.

\begin{cor}[\hspace*{-.3em}{\cite[Prop.\,4.4]{BBG}}]
    \label{prop_symmetrized_is_nondegenerate}
    The non-zero symmetrised left (resp.\ right) cointegrals are non-degenerate
    symmetric linear forms on $H$.
\end{cor}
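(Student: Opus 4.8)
The statement has two parts: for a non-zero symmetrised cointegral we must establish (a) symmetry and (b) non-degeneracy of the associated bilinear form. The plan is to reduce both to properties of the underlying (non-symmetrised) cointegral, exploiting that $H$ is assumed unimodular, so that $\modulus = \counit$ and hence $\elu = \elu\cop = \oneQ$, together with the fact that the pivot $\pivotQ$ is invertible.

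For symmetry I would start from the identity \eqref{eq:symmetricityPropsOfShiftedCoints}, which gives $\cointSymR(ab) = \cointSymR((b \leftharpoonup \modulus)a)$ for all $a,b \in H$. Since $H$ is unimodular we have $\modulus = \counit$, and then the hook definition \eqref{eq:hookActionDefined} together with the counit axiom yields $b \leftharpoonup \counit = \counit(b\sweedler{1})b\sweedler{2} = b$. Thus $\cointSymR(ab) = \cointSymR(ba)$, which is exactly the symmetry of the form $(a,b) \mapsto \cointSymR(ab)$. The argument for $\cointSymL$ is identical, this time using $\modulus \rightharpoonup b = b$.

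For non-degeneracy, recall that in the unimodular case $\cointSymR = \coint \leftharpoonup \pivotQ$, that is $\cointSymR(a) = \coint(\pivotQ a)$, where $\coint$ is a right cointegral. The assignment $\coint \mapsto \coint \leftharpoonup \pivotQ$ is a linear automorphism of $H^*$ (invertible because $\pivotQ$ is), so a non-zero $\cointSymR$ corresponds to a non-zero right cointegral $\coint$, which by Proposition~\ref{prop:coint-properties}\,(2) is non-degenerate, meaning $(a,b) \mapsto \coint(ab)$ is non-degenerate. If $\cointSymR(ab) = \coint((\pivotQ a) b) = 0$ for all $b$, then non-degeneracy of $\coint$ forces $\pivotQ a = 0$, hence $a = 0$; the symmetric argument handles the second slot. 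Therefore $(a,b)\mapsto\cointSymR(ab)$ is non-degenerate, and likewise for $\cointSymL$.

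Since both steps are short manipulations, I do not anticipate a genuine obstacle. The two points requiring care are, first, that unimodularity is precisely what collapses the twisted hook actions in \eqref{eq:symmetricityPropsOfShiftedCoints} to the identity, and second, that invertibility of $\pivotQ$ is what transports both the ``non-zero'' condition and non-degeneracy from $\coint$ to $\cointSymR$ (resp.\ $\cointSymL$).
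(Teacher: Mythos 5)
Your proposal is correct and follows essentially the same route the paper indicates: symmetry from \eqref{eq:symmetricityPropsOfShiftedCoints} collapsing under $\modulus=\counit$, and non-degeneracy transported from Proposition~\ref{prop:coint-properties}\,(2) via the invertible shift by $\pivotQ$ (with $\elu=\elu\cop=\oneQ$ in the unimodular case). The paper gives no further detail beyond citing exactly these ingredients, so your write-up is simply a fleshed-out version of its argument.
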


\begin{rmrk}\label{rem:new}
    Let $H$ be a finite-dimensional Hopf algebra.
    Following~\cite{Radford}, for any grouplike element $g\in H$, one can define the left
    ideal $L_g \subseteq H^*$ of \emph{left $g$-cointegrals} (called left $g$-integrals
    in~\cite{Radford}) as 
    \begin{align}
        L_g &= 
        \{ 
            \varphi \in H^* \mid 
            (\id \otimes \varphi) (\Delta(h)) = \varphi(h) g
            \quad
            \forall h\in H
        \}
    \end{align}
    These ideals are all one-dimensional \cite[Prop.~3]{Radford}.
    Indeed, note that $L_1$ is the space of left cointegrals.
    Then the linear isomorphism
    \begin{align}
        L_g \ni \varphi \mapsto (\varphi \leftharpoonup h) \in L_{h\inv g}
        \quad
        \text{for all grouplike } g, h,
    \end{align}
    shows $L_g \cong L_1$.
    Similarly one may define the space $R_g$ of \emph{right $g$-cointegrals}.
    Thus, if $(H,\pivotQ)$ is a pivotal Hopf algebra, Lemma~\ref{prop:NewVersion3.7}
    reduces to the statement
    \begin{align}
        (\cointL \leftharpoonup \pivotQ\inv) \in L_{\pivotQ}
        \quad \text{and} \quad
        (\cointR \leftharpoonup \pivotQ) \in R_{\pivotQ\inv}
        \ . 
    \end{align}
    For unimodular $H$, a symmetrized left cointegral is therefore a left $\pivotQ$-cointegral, and
    a symmetrized right cointegral is a right $\pivotQ\inv$-cointegral.
\end{rmrk}

\section{Modified traces for quasi-Hopf algebras}\label{sec:proof-modTr}

    Throughout this section $H$ will be a finite-dimensional quasi-Hopf algebra  
    over~$\field$.

    \subsection*{Tensoring with the regular representation}
    Let $V\in \hmod$.
    We denote by $\trivialMod{V}$ the vector space $V$ with trivial 
    $H$-module structure, i.e.\ $hv = \counit(h)v$ for $h\in H, v\in V$.
    Recall the definition of the elements $\pR$, etc., from \eqref{eq:q,\pL}.    
    We need the following generalisation of \cite[Thm.\,5.1]{BBG} to quasi-Hopf algebras 
    (see also \cite[Sec.\,2.3]{Sch}).

\begin{samepage}
    \begin{prop}\label{thm_tensor_powers_n_is_2}
~
        \begin{enumerate}
            \item The map
                \begin{align*}
                    \Fr:H \otimes \trivialMod{V} &\to H\otimes V,
                    \\
                    h\otimes v &\mapsto 
                    \left( \Delta(h) \pR \right) \cdot (1\otimes v) =
                    h\sweedler{1}\pR_1 \otimes h\sweedler{2}\pR_2 v
                \end{align*}
                is an isomorphism of $H$-modules, with inverse
                \begin{align*}
                    \Gr:H\otimes V & \to H\otimes \trivialMod{V},\\
                    h\otimes v &\mapsto 
                    \left[ (\id\otimes S)(\qR \Delta(h)) \right] \cdot (1\otimes v) \ .
                \end{align*}
            \item
                The map
                \begin{align*}
                    \Fl: \trivialMod{V} \otimes H &\to V\otimes H,
                    \quad
                    v\otimes h \mapsto 
                    \left( \Delta(h) \pL \right) \cdot (v\otimes 1)
                \end{align*}
                is an isomorphism of $H$-modules, with inverse
                \begin{align*}
                    \Gl:V \otimes H &\to \trivialMod{V} \otimes H,\\
                    v\otimes h &\mapsto 
                    \left[ (S\inv \otimes \id) (\qL \Delta(h)) \right] \cdot (v\otimes 1)\ .
                \end{align*}
        \end{enumerate}
    \end{prop}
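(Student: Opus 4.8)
The plan is to establish part (1) in full and then obtain part (2) by the same argument applied to the coopposite algebra. For part (1) I would proceed in three steps: first check that $\Fr$ is a morphism of $H$-modules, then verify the \emph{single} composite $\Fr\circ\Gr = \id_{H\otimes V}$, and finally use a dimension count to promote this one-sided inverse to a genuine two-sided inverse. The reason to verify only one composite is that $H\otimes\trivialMod{V}$ and $H\otimes V$ are finite-dimensional $\field$-vector spaces of the same dimension, so a one-sided inverse of a linear map between them is automatically two-sided.

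The module-map property rests on the observation that the $H$-action on $H\otimes\trivialMod{V}$ collapses to left multiplication on the first tensorand: for $a,h\in H$ and $v\in V$ one has $a.(h\otimes v) = a\sweedler{1}h\otimes\counit(a\sweedler{2})v = ah\otimes v$. Since $\Delta$ is an algebra homomorphism, $\Fr(ah\otimes v) = \Delta(a)\Delta(h)\pR\cdot(\oneQ\otimes v) = a.\Fr(h\otimes v)$, where on the right the action on $H\otimes V$ is through $\Delta$. This step is immediate and uses no quasi-Hopf identities.

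The substantive step is $\Fr\circ\Gr = \id$. Writing $\Gr(h\otimes v) = \qR_1 h\sweedler{1}\otimes S(\qR_2 h\sweedler{2})v$ and then applying $\Fr$, the composite is the action on $\oneQ\otimes v$ of the element
\[
Z \;=\; \Delta(\qR_1 h\sweedler{1})\,\pR\,\big(\oneQ\otimes S(\qR_2 h\sweedler{2})\big)\ \in\ H\otimes H.
\]
Using that $\Delta$ is multiplicative and $S$ antimultiplicative, I would first strip off the $h$-dependence with the second identity of \eqref{rel_pr_coproduct}, namely $\Delta(h\sweedler{1})\pR(\oneQ\otimes S(h\sweedler{2})) = \pR(h\otimes\oneQ)$; after moving the disjoint factor $h\otimes\oneQ$ to the right this reduces $Z$ to $\Delta(\qR_1)\pR(\oneQ\otimes S(\qR_2))\,(h\otimes\oneQ)$. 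The first identity of \eqref{eq:identities qpL} then collapses the prefactor to $\oneQ\otimes\oneQ$, giving $Z = h\otimes\oneQ$ and hence $\Fr(\Gr(h\otimes v)) = h\otimes v$. The main thing to get right here — and the only real obstacle — is that no illegal re-association of $\Delta$ is performed: all coassociator contributions are already absorbed into the definitions of $\qR,\pR$ and into the identities \eqref{eq:identities qpL}, \eqref{rel_pr_coproduct}, which are precisely the graphical zig-zag relations depicted in Figure~\ref{fig_qRpR_relation}. Organising the computation via the string-diagram calculus is the cleanest way to keep the bracketings honest.

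Finally, since $\dim_\field(H\otimes\trivialMod{V}) = \dim_\field H\cdot\dim_\field V = \dim_\field(H\otimes V)$, the identity $\Fr\circ\Gr = \id$ forces $\Fr$ to be bijective; thus $\Gr = \Fr\inv$ is a two-sided inverse, and being the inverse of a module map it is automatically a module map. This dispenses with a second, parallel computation for $\Gr\circ\Fr$. Part (2) then follows either by the verbatim left-handed computation, now using the third and fourth identities of \eqref{eq:identities qpL} together with the $\qL,\pL$-analogues of \eqref{rel_pr_coproduct} (obtained from \eqref{rel_pr_coproduct} via the coopposite relations), or more economically by applying part (1) to $H\cop$ and translating through the relations $(\qR)\cop = \qL_{21}$ and $(\pR)\cop = \pL_{21}$ recorded above, under which left-tensoring with the regular representation in $\hmod$ corresponds to right-tensoring in $\hmod[H\cop]$.
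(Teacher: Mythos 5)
Your argument is correct and follows essentially the same route as the paper: the heart of both proofs is the computation of $\Fr\circ\Gr$ using the second identity of \eqref{rel_pr_coproduct} to strip off the $h$-dependence and then the first identity of \eqref{eq:identities qpL} to collapse $\Delta(\qR_1)\,\pR\,(\oneQ\otimes S(\qR_2))$ to $\oneQ\otimes\oneQ$ (the paper just carries this out graphically rather than in index notation). Your two economies — replacing the check of $\Gr\circ\Fr=\id$ by a finite-dimension count, and deducing part (2) from part (1) applied to $H\cop$ via $(\qR)\cop=\qL_{21}$ and $(\pR)\cop=\pL_{21}$ — are both valid and slightly streamline the paper's ``similarly'' and ``completely analogous'' steps.
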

\end{samepage}

\begin{proof}
    We only prove the first part, the second part is completely analogous. 
    It is obvious that $\Fr$ is an intertwiner, so we only need to show that $\Gr$ is
    a two-sided inverse.

    Recall the second identity in \eqref{rel_pr_coproduct},
    which can be graphically represented as
    \begin{equation}\label{eq:relations_HN}
        \scalebox{0.65}{
            \ipic{-0.5}{rel_pR_1}{0.4}
            \put(-39,-15){\scalebox{1.2}{$\pR$}}
        }
        \ =
        \scalebox{0.65}{
            \ipic{-0.5}{rel_pR_4}{0.4}
            \put(-80,-70){\scalebox{1.3}{$\pR$}}
        }
        \ .
    \end{equation}
    Using pictures we compute the composition
    $\Fr \circ \Gr$:
    \begin{equation}
        \scalebox{0.7}{
            \ipic{-0.5}{FoG_is_id_1}{0.23}
            \put(-33,-26){$\Gr$}
            \put(-33,18){$\Fr$}
        }
        \ =
        \scalebox{0.7}{
            \ipic{-0.5}{FoG_is_id_2}{0.4}
            \put(-72,-70){$\qR$}
            \put(-35,-10){$\pR$}
        }
        \ = 
        \scalebox{0.7}{
            \ipic{-0.5}{FoG_is_id_3}{0.4}
            \put(-102,-69 ){$\qR$}
            \put(-37,-28){$\pR$}
        }
        \stackrel{\scalebox{1.0}{\eqref{eq:relations_HN}}}{=} 
        \scalebox{0.7}{
            \ipic{-0.5}{FoG_is_id_4}{0.4}
            \put(-98,-72){$\qR$}
            \put(-82,-35){$\pR$}
        }
        \ \stackrel{\scalebox{1.0}{\eqref{eq:identities qpL}}}{=}
        \scalebox{0.7}{
            \ipic{-0.5}{FoG_is_id_5}{0.4}
        }
    \end{equation}
    Similarly one shows $\Gr\circ \Fr = \id$.
    Since $\Fr$ is an intertwiner and bijective, 
    $\Gr$ necessarily is an intertwiner as well and we are done.
\end{proof}

As a consequence of the previous considerations we have the following lemma.

\begin{lemma}\label{lemma_endomorphisms_Hop_matrix}
Let $W$ be an $H$-module. 
The map
\begin{align}
\Xi:H\op \otimes_{\field} \End_\field(W) &\rightarrow \End_H(H\otimes W) \ ,
    \nonumber\\ 
a\otimes m&\mapsto \Fr \circ (r_a \otimes m) \circ \Gr
\end{align}
is an algebra isomorphism.
\end{lemma}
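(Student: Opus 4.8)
The plan is to realise $\Xi$ as a composite of two algebra isomorphisms: conjugation by the module isomorphism $\Fr$ of Proposition~\ref{thm_tensor_powers_n_is_2}, and the standard description of the endomorphism algebra of a free module. First I would pin down the $H$-module structure on $H\otimes\trivialMod{W}$. Since $W$ carries the trivial action, for $h,a\in H$ and $w\in W$ we have
\[
h.(a\otimes w) = h\sweedler{1}a \otimes \counit(h\sweedler{2})\,w = ha\otimes w,
\]
so $H$ acts only on the left tensor factor; that is, $H\otimes\trivialMod{W}$ coincides with the free left module ${}_H H\otimes_\field W$ on which $H$ acts through $l_h\otimes\id_W$. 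In particular $r_a\otimes m$ is $H$-linear for every $a\in H$ and $m\in\End_\field(W)$, and I claim that
\[
\Theta:H\op\otimes_\field\End_\field(W)\to\End_H(H\otimes\trivialMod{W}),\qquad a\otimes m\mapsto r_a\otimes m,
\]
is an algebra isomorphism.

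The verification of this claim is the routine but essential step. It is a homomorphism because $r_a r_{a'}=r_{a'a}$ exactly reproduces the opposite multiplication of $H$ while $\End_\field(W)$ composes as usual; bijectivity is the standard identification of endomorphisms of a free left module. Concretely, $\End_H({}_H H)\cong H\op$ since every $H$-linear self-map of ${}_H H$ is right multiplication $r_b$ by its value at $\oneQ$, and after choosing a basis of $W$ an $H$-linear endomorphism of ${}_H H\otimes_\field W$ is given by a matrix with entries in $\End_H({}_H H)=H\op$, which is precisely an element in the image of $\Theta$. This establishes surjectivity; injectivity follows from the same matrix description.

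Finally, because $\Fr:H\otimes\trivialMod{W}\to H\otimes W$ is an isomorphism of $H$-modules with inverse $\Gr$ (Proposition~\ref{thm_tensor_powers_n_is_2}), conjugation $\psi\mapsto\Fr\circ\psi\circ\Gr$ is an algebra isomorphism $C_{\Fr}:\End_H(H\otimes\trivialMod{W})\xrightarrow{\sim}\End_H(H\otimes W)$ (using $\Gr\circ\Fr=\id$ to see it preserves composition). By the very definition of $\Xi$ one has $\Xi=C_{\Fr}\circ\Theta$, so $\Xi$ is a composite of two algebra isomorphisms and hence itself an algebra isomorphism, as asserted.

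The main obstacle is really just keeping the bookkeeping honest at two points where it is easy to slip: one must correctly track the opposite multiplication (the appearance of $H\op$ rather than $H$), coming from $r_a r_{a'}=r_{a'a}$; and one must check that the trivial action on $W$ genuinely collapses the coproduct to plain left multiplication on the first factor, so that no coassociator corrections enter the $H$-module structure on $H\otimes\trivialMod{W}$. Once these are settled, the surjectivity of $\Theta$ is a standard free-module argument and the rest is formal.
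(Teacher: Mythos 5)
Your proof is correct and follows essentially the same route as the paper: the paper likewise combines the standard identification $A^{\textup{op}}\cong\End_A(A)$, $a\mapsto r_a$, with the module isomorphism $\Fr$ from Proposition~\ref{thm_tensor_powers_n_is_2} to get bijectivity, and then verifies multiplicativity by the computation $\Xi(a\otimes m)\circ\Xi(b\otimes n)=\Fr\circ((r_a\circ r_b)\otimes(m\circ n))\circ\Gr$. Your version merely packages the same ingredients as a composite $C_{\Fr}\circ\Theta$ of two algebra isomorphisms and spells out the free-module/matrix identification of $\End_H(H\otimes\trivialMod{W})$ that the paper leaves implicit.
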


\begin{proof}
    Since for any
    algebra $A$ we have the algebra isomorphism 
    \begin{align}
        A\op \cong \End_A(A), \quad a \mapsto r_a \ ,
    \end{align}
    together with isomorphism property from Proposition~\ref{thm_tensor_powers_n_is_2}
    we see that the prescription
        $\Xi: (a\otimes m) \mapsto \Fr \circ (r_a\otimes m) \circ \Gr$
    is bijective, 
    and thus the isomorphism is established.
    It remains to be shown that the isomorphism is one of algebras.
    The multiplication for the endomorphism algebras is just composition, 
    and for $H\op \otimes_\field \End_\field(W)$ we simply take the one induced by 
    the tensor product of $\field$-algebras.
    Then the calculation
    \begin{align}
        \Xi(a \otimes m)\circ \Xi(b\otimes n)
        &= \Fr\circ (r_a\otimes m)\circ \Gr \circ \Fr (r_b\otimes n) \circ \Gr 
        \notag \\ \notag
        &= \Fr\circ \left( (r_a\circ r_b)\otimes (m\circ n) \right) \circ \Gr \\ 
        &= \Xi ((a\otimes m)\cdot (b\otimes n))
    \end{align}
    shows that $\Xi$ indeed preserves the algebra structure.               
\end{proof}

A similar result holds for $\End_H(W\otimes H)$.
    
    \subsection*{The main theorem}
    
    We will need the following extension result for symmetric linear forms:
    Let $A$ be a finite-dimensional unital $\field$-algebra. 
    By a \textsl{family of trace maps} $\{\modTr_P :\End_A(P) \to \field \}_{P\in \hpmod[A]}$
    (as opposed to left/right modified traces) we mean a family as in Definition~\ref{def:mod-tr}\,(i),
    which, however, only satisfies condition 1 (cyclicity) and not conditions 2 or 2' 
    (which do not make sense in $A$-mod). We have 
    (\!\!\cite[Prop.\,2.4]{BBG}, see also \cite[Prop.\,5.8]{GR}):

    \begin{prop}
        \label{prop_sym_lin_form_extends_uniquely}
        Let $A$ be a finite-dimensional unital $\field$-algebra. 
        Then a symmetric linear form $t$ on $A$ extends uniquely to a family of 
        trace maps 
        $\{\modTr_P :\End_A(P) \to \field \}_{P\in \hpmod[A]}$, given by
        \begin{align}
            \modTr_P(f) = \sum_{i=1}^n t( (b_i \circ f \circ a_i) (1) ),
            \quad f\in \End_A(P),
        \end{align}
        where $n$ depends on $P$, and $a_i:A \to P$, $b_i:P\to A$ satisfy
        \begin{align}
            \id_P = \sum_{i=1}^n a_i \circ b_i\ .
        \end{align}
        In particular 
        \begin{align}
            t_A(r_x) = t(x), x\in A\ .
        \end{align}
    \end{prop}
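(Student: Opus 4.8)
The plan is to recognise the displayed formula as the standard Hattori--Stallings-type construction of a trace out of a symmetric form, and to notice that \emph{cyclicity alone} already determines the extension, so that uniqueness and the explicit formula come out together.

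First I would transport the datum onto $A$ itself. Using the algebra isomorphism $A\op \cong \End_A(A)$, $x \mapsto r_x$ (already recorded in the proof of Lemma~\ref{lemma_endomorphisms_Hop_matrix}), a linear form on $\End_A(A)$ is the same as a linear form on $A$, and I set $\modTr_A(g) := t(g(1))$, so that $\modTr_A(r_x) = t(x)$. Since $r_y \circ r_x = r_{xy}$, the cyclicity identity $\modTr_A(r_y \circ r_x) = \modTr_A(r_x \circ r_y)$ reads exactly $t(xy) = t(yx)$. Thus ``$t$ symmetric'' and ``$\modTr_A$ cyclic on $\End_A(A)$'' are literally the same statement; this is the one structural input the whole argument rests on, and it also yields the asserted normalisation $\modTr_A(r_x)=t(x)$ for free.

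Next I would prove uniqueness, which at the same time forces the formula. Let $\modTr_\bullet$ be \emph{any} family of trace maps and let $P$ be projective, so that $P$ is a direct summand of a free module and there exist $a_i : A \to P$, $b_i : P \to A$ with $\id_P = \sum_i a_i \circ b_i$. Writing $f = \id_P \circ f = \sum_i a_i \circ (b_i \circ f)$ and applying cyclicity to move each $a_i : A \to P$ past $b_i \circ f : P \to A$ gives
\begin{align*}
\modTr_P(f)
&= \sum_i \modTr_P\big(a_i \circ (b_i \circ f)\big)
= \sum_i \modTr_A\big((b_i \circ f) \circ a_i\big) \\
&= \sum_i t\big((b_i \circ f \circ a_i)(1)\big).
\end{align*}
Hence every family of trace maps is completely determined by its restriction $\modTr_A$, i.e.\ by $t$, which proves uniqueness and simultaneously pins down the formula in the statement.

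It then remains to prove \emph{existence}: taking the right-hand side above as the \emph{definition} of $\modTr_P$, I must check that it is well defined and cyclic. The only delicate point --- and the main obstacle --- is independence of the chosen decomposition $\id_P = \sum_i a_i \circ b_i$. For a second decomposition $\id_P = \sum_j a_j' \circ b_j'$ I would insert $\sum_j a_j' \circ b_j' = \id_P$ into $\sum_i \modTr_A(b_i \circ f \circ a_i)$, regroup the resulting maps $A \to A$, apply the cyclicity of $\modTr_A$ on $\End_A(A)$ (that is, the symmetry of $t$ from the first step) to exchange the two factors, and finally collapse the sum over $i$ using $\sum_i a_i \circ b_i = \id_P$; this returns $\sum_j \modTr_A(b_j' \circ f \circ a_j')$. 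The identical insert--exchange--collapse manipulation, now using decompositions of $\id_P$ and $\id_{P'}$, verifies cyclicity $\modTr_P(g \circ f) = \modTr_{P'}(f \circ g)$ for $f : P \to P'$ and $g : P' \to P$, and the normalisation follows from the trivial decomposition $\id_A = \id_A \circ \id_A$. Throughout, the symmetry of $t$ is the sole nontrivial ingredient: it is exactly what makes the various reorderings of compositions in $\End_A(A)$ legitimate.
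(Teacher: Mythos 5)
Your proof is correct and follows essentially the standard Hattori--Stallings-type argument that the paper does not reproduce but cites from \cite{BBG} (Prop.~2.4) and \cite{GR} (Prop.~5.8): uniqueness via linearity and cyclicity applied to a decomposition $\id_P=\sum_i a_i\circ b_i$, and existence (well-definedness plus cyclicity of the resulting family) via the insert--exchange--collapse computation whose only input is the symmetry of $t$, equivalently the cyclicity of $\modTr_A$ under $A\op\cong\End_A(A)$. Nothing is missing.
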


    The next lemma is an instance of the Reduction Lemma \cite[Lem.\,3.2]{BBG}
    when one takes $\cat = \hmod$ and $H$ as projective generator.

    \begin{lemma}\label{lem:redLemma}
        Let $H$ be pivotal with pivot $\pivotQ$.
        A symmetric linear function $t$ on $H$ extends to a 
        right modified trace on $\hpmod$ if and only if for all $f\in \End_H(H\otimes H)$
        \begin{align}\label{thm_redLemma_right}
            \modTr_{H\otimes H} (f) = \modTr_H\left( \mathsf{tr}^r_H(f)\right)
        \end{align}
        holds, where $\modTr_P$ is as in Proposition \ref{prop_sym_lin_form_extends_uniquely},
        for $P\in \hpmod$.

        Similarly, $\modTr$ extends to a left modified trace on $\proj[{}_H\cat[M]]$ if and only if
        \begin{align}\label{thm_redLemma_left}
            \modTr_{H\otimes H} (f) = \modTr_H\left( \mathsf{tr}^l_H(f)\right)
        \end{align}
        holds for all $f\in \End_H(H\otimes H)$.
    \end{lemma}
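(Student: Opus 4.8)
The plan is to prove the two directions separately, the forward (``only if'') being immediate and the converse (``if'') carrying all the content. Throughout, $\modTr_\bullet$ denotes the family of trace maps attached to the symmetric form $t$ by Proposition~\ref{prop_sym_lin_form_extends_uniquely}; by construction it already satisfies cyclicity, so the assertion ``$t$ extends to a right modified trace'' means precisely that the right partial trace property \eqref{def_rightpartialtraceproperty} holds for every $P\in\hpmod$ and every $V\in\cat$. If it does, specialising to $P=V=H$ gives \eqref{thm_redLemma_right}, which settles the ``only if'' direction. For the converse I assume \eqref{thm_redLemma_right} and must upgrade it to the full property; I would do this in two reduction steps, first collapsing the projective slot $P$ to $H$, then the arbitrary slot $V$ to $H$.

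For the reduction in $P$: since $H$ is a projective generator, write $\id_P=\sum_i a_i\circ b_i$ with $a_i\colon H\to P$ and $b_i\colon P\to H$. For $f\in\End_H(P\tensor V)$, inserting $\id_{P\tensor V}=\sum_i(a_i\tensor\id_V)\circ(b_i\tensor\id_V)$ and applying cyclicity gives $\modTr_{P\tensor V}(f)=\sum_i\modTr_{H\tensor V}(g_i)$ with $g_i=(b_i\tensor\id_V)\circ f\circ(a_i\tensor\id_V)$. On the other side, the same insertion together with cyclicity and the naturality of the partial trace in its untraced legs, $b_i\circ\mathsf{tr}^r_V(f)\circ a_i=\mathsf{tr}^r_V(g_i)$, yields $\modTr_P(\mathsf{tr}^r_V(f))=\sum_i\modTr_H(\mathsf{tr}^r_V(g_i))$. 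Hence the property for all projective $P$ (and fixed $V$) is equivalent to the statement $\modTr_{H\tensor V}(g)=\modTr_H(\mathsf{tr}^r_V(g))$ for all $g\in\End_H(H\tensor V)$; note this step imposes nothing on $V$.

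The reduction in $V$ is the crux. Choose an epimorphism $\pi\colon Q\twoheadrightarrow V$ from a free module $Q=H^{\oplus m}$, which exists since $H$ generates. Because $H$ is projective, both $H\tensor Q$ and $H\tensor V$ are projective, so the epimorphism $s\vcentcolon=\id_H\tensor\pi\colon H\tensor Q\to H\tensor V$ splits; fix a section $\sigma$ with $s\circ\sigma=\id_{H\tensor V}$. Given $g\in\End_H(H\tensor V)$ set $\tilde g\vcentcolon=\sigma\circ g\circ s\in\End_H(H\tensor Q)$. Two identities then transport the problem from $V$ to $Q$: cyclicity gives $\modTr_{H\tensor V}(g)=\modTr_{H\tensor Q}(\tilde g)$, and dinaturality of the partial trace, applied to $\sigma\circ g\colon H\tensor V\to H\tensor Q$ along $\pi$, gives $\mathsf{tr}^r_V(g)=\mathsf{tr}^r_V(s\circ\sigma\circ g)=\mathsf{tr}^r_Q(\sigma\circ g\circ s)=\mathsf{tr}^r_Q(\tilde g)$, whence $\modTr_H(\mathsf{tr}^r_V(g))=\modTr_H(\mathsf{tr}^r_Q(\tilde g))$. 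Thus the $(H,V)$ instance follows from the $(H,Q)$ instance. Finally, writing $H\tensor Q\cong(H\tensor H)^{\oplus m}$ and using that both $\modTr$ and $\mathsf{tr}^r$ decompose over this direct sum as a sum of diagonal contributions, the $(H,Q)$ instance reduces to $m$ copies of the $(H,H)$ instance, i.e.\ to the assumed \eqref{thm_redLemma_right}.

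The main obstacle is the dinaturality step: it is what allows a general, possibly non-projective $V$ to be replaced by the free cover $Q$ even though $V$ need not be a direct summand of $Q$, and it works precisely because $H\tensor Q\to H\tensor V$ is a split epimorphism of projectives, a feature special to tensoring with the projective generator. The left statement \eqref{thm_redLemma_left} is proved identically, replacing $\mathsf{tr}^r$ and the right-hand tensor factor by their left analogues. I note that this entire argument is the specialisation of the Reduction Lemma \cite[Lem.\,3.2]{BBG} to $\cat=\hmod$ with projective generator $H$, so one may alternatively verify its hypotheses (that $\hmod$ is a pivotal finite tensor category, that $H$ is a projective generator, and that $\modTr_\bullet$ arises from the symmetric form $t$) and invoke it directly.
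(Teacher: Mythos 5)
Your proposal is correct and matches the paper's approach: the paper offers no proof of this lemma beyond observing that it is the instance of the Reduction Lemma \cite[Lem.\,3.2]{BBG} for $\cat=\hmod$ with projective generator $H$, which is exactly the reduction you carry out (and also cite). Your two-step argument --- collapsing $P$ to $H$ via $\id_P=\sum_i a_i\circ b_i$ together with cyclicity and naturality, then collapsing $V$ to $H$ via a split epimorphism $H\tensor H^{\oplus m}\twoheadrightarrow H\tensor V$ together with cyclicity and dinaturality of $\mathsf{tr}^r$ --- is a correct in-line proof of that cited lemma in this special case.
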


	We denote the subspace of symmetric forms $t\in H^*$ which extend to a right/left modified trace
	on $\hpmod$ by
	\begin{align}
	\symIntSpace^{r/l} \ .
	\end{align}
    Given $t\in \symIntSpace^{r/l}$, the corresponding modified trace $\modTr_\bullet$ takes the value
    \begin{align}\label{eq:symCoint-to-modTr}
        \End_H(H) \rightarrow \field
        \quad , \quad
        f \mapsto \modTr_H(f) = t(f(\one))
    \end{align}
	on the left regular module $H$.    

    We can now state the main theorem of our paper.
    Parts 2 and 3 generalise \cite[Thm.\,1]{BBG} to the setting of quasi-Hopf algebras. A stronger version of Part~1 was shown for Hopf algebras in \cite[Cor.\,6.1]{FOG}.

    \begin{thm}\label{thm:symcoint-is-modtr}
        Let $(H,\pivotQ)$ be a finite-dimensional pivotal quasi-Hopf algebra
        over~$\field$.
        We have:
        \begin{enumerate}
            \item
                A non-degenerate left (right) modified trace on $\hpmod$ exists if and only if $H$ 
                is unimodular.
		\end{enumerate}                
        Suppose now that $H$ is in addition unimodular. Then:
        \begin{enumerate} \setcounter{enumi}{1}
            \item 
                $\symIntSpace^{r/l}$ is equal to the space of symmetrised 
                right/left cointegrals.
                In particular, $\dim(\symIntSpace^{r/l})=1$.               
            \item 
			    A non-zero element of $\symIntSpace^{r/l}$ extends to a non-degenerate 
                right/left modified trace on $\hpmod$.
        \end{enumerate}
    \end{thm}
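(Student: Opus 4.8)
The plan is to reduce all three parts to a single computation of the two sides of the partial-trace property on the module $H\otimes H$, using the left regular module $H$ as projective generator of $\hmod$. By the Reduction Lemma (Lemma~\ref{lem:redLemma}) a symmetric form $t\in H^*$ lies in $\symIntSpace^r$ exactly when $\modTr_{H\otimes H}(f)=\modTr_H(\mathsf{tr}^r_H(f))$ for all $f\in\End_H(H\otimes H)$, where $\modTr_\bullet$ is attached to $t$ by Proposition~\ref{prop_sym_lin_form_extends_uniquely}. By Lemma~\ref{lemma_endomorphisms_Hop_matrix} every such $f$ is a sum of terms $\Xi(a\otimes m)=\Fr\circ(r_a\otimes m)\circ\Gr$ with $a\in H$ and $m\in\End_\field(H)$, so it is enough to test the identity on these. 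I will prove Part~2 first, as Parts~1 and~3 then follow quickly.

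For Part~2 I evaluate both sides on $\Xi(a\otimes m)$. Since $\Gr$ is an $H$-module isomorphism $H\otimes H\xrightarrow{\sim}H\otimes\trivialMod{H}\cong H^{\oplus\dim H}$ conjugating $\Xi(a\otimes m)$ to $r_a\otimes m$, and $\modTr_\bullet$ is additive over direct summands (by cyclicity) with $\modTr_H(r_x)=t(x)$, the left-hand side is $\modTr_{H\otimes H}(\Xi(a\otimes m))=\mathrm{tr}(m)\,t(a)$, where $\mathrm{tr}(m)$ is the ordinary trace of the linear map $m$. For the right-hand side I compute $\mathsf{tr}^r_H(\Xi(a\otimes m))$ straight from the definition of the partial trace, inserting $\coevL_H(1)=\sum_i\betaQ e_i\otimes e^i$, $\evR_H(v\otimes w^*)=\langle\pivotQ S\inv(\alphaQ)v,w^*\rangle$, and the associator given by the $\coassQ$-action, while $\Fr,\Gr$ bring in $\pR,\qR$. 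Using the string-diagram identities already exploited for $\Fr\circ\Gr=\id$ together with \eqref{eq:identities qpL} and \eqref{rel_pr_coproduct}, I expect this to collapse to $\mathsf{tr}^r_H(\Xi(a\otimes m))=r_{x(a,m)}$ for an explicit $x(a,m)\in H$ depending linearly on $m$. The reduction identity then reads $\mathrm{tr}(m)\,t(a)=t(x(a,m))$ for all $a,m$; testing against a basis of $\End_\field(H)$ turns this into the element identity $t(h)\oneQ=(t\otimes\pivotQ)(\qR\Delta(h)\pR)$ in $H$. By Corollary~\ref{coro_symmetrized_simple_condition}(2) this is precisely the statement that $t$ is a symmetrised right cointegral, and by Corollary~\ref{prop_symmetrized_is_nondegenerate} such a form is automatically symmetric and non-degenerate. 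Hence $\symIntSpace^r$ equals the space of symmetrised right cointegrals, which is one-dimensional since right cointegrals are unique up to scalar (Proposition~\ref{prop:coint-properties}(1)). The bookkeeping in this partial trace — tracking $\coassQ$, $\alphaQ$, $\betaQ$ and the insertion points of $\qR,\pR$ — is the main obstacle, and is exactly where the quasi-Hopf case is more delicate than the Hopf case of \cite{BBG}.

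For Part~3 let $0\neq t\in\symIntSpace^r$. By Part~2 and Corollary~\ref{prop_symmetrized_is_nondegenerate} the form $t$ is non-degenerate, making $H$ a symmetric Frobenius algebra, and $t$ extends to a right modified trace $\modTr_\bullet$ by definition of $\symIntSpace^r$. To prove non-degeneracy of $\modTr_\bullet$ it suffices to treat the generator $P=H$ and arbitrary $M$, the other projectives being summands of some $H^{\oplus n}$ and the pairing \eqref{def_modTr_pairing} being compatible with finite direct sums and with restriction to summands via idempotents. For $P=H$ I identify $\Hom_H(H,M)\cong M$ by $g\mapsto g(\oneQ)$, turning the pairing into $(f,v)\mapsto t(f(v))$ on $\Hom_H(M,H)\times M$. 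The map $f\mapsto t\circ f$ is injective — if $t(f(m))=0$ for all $m$ then $t(h\,f(m))=t(f(hm))=0$ for all $h,m$, so $f(m)=0$ by non-degeneracy of $t$ — and, $H$ being self-injective, a dimension count gives an isomorphism $\Hom_H(M,H)\xrightarrow{\sim}M^*$; under it the pairing becomes the canonical evaluation of $M^*$ on $M$, which is non-degenerate. The only mild point is the reduction from a general projective to the generator, which I would carry out as in \cite{BBG}.

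Finally Part~1. The direction ($\Leftarrow$) is immediate from Parts~2 and~3: if $H$ is unimodular then $\symIntSpace^r$ contains a non-zero element, which extends to a non-degenerate right modified trace. For ($\Rightarrow$), a non-degenerate right modified trace restricts to a non-degenerate symmetric $t$ with $t\in\symIntSpace^r$, and since the Part~2 computation nowhere uses unimodularity it shows $t$ satisfies $(t\otimes\id)(\qR\Delta(h)\pR)=t(h)\pivotQ\inv$. I expect the decisive step to be extracting from this a $\modulus$-twisted symmetry $t(ab)=t\big((b\leftharpoonup\modulus)a\big)$ — the analogue of the identity carried by symmetrised cointegrals in \eqref{eq:symmetricityPropsOfShiftedCoints} — obtained by the same manipulations with $\qR,\pR,\coassQ$ that give the Nakayama identity of Proposition~\ref{prop:coint-properties}(4). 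Granting it, symmetry of $t$ yields $t(ba)=t\big((b\leftharpoonup\modulus)a\big)$ for all $a,b$, and non-degeneracy of $t$ forces $b=b\leftharpoonup\modulus$ for all $b$; applying $\counit$ gives $\modulus=\counit$, so $H$ is unimodular. (As a cross-check this matches the general principle that a non-degenerate modified trace on $\proj$ exists only for unimodular categories \cite{GKPM2}, since $\hmod$ is unimodular iff $H$ is.) The left-handed statements are proved verbatim with $\Fl,\Gl$, $\qL,\pL$ and the left partial trace, using Corollary~\ref{coro_symmetrized_simple_condition}(1).
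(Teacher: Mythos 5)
Your treatment of Parts 2 and 3 follows the paper's route. Part 2 is exactly the paper's argument: reduction via Lemma~\ref{lem:redLemma}, the isomorphism $\Xi$ of Lemma~\ref{lemma_endomorphisms_Hop_matrix}, the value $\mathsf{tr}_\field(m)\,t(a)$ on the left-hand side, and the partial-trace computation leading to $t(h)\,\oneQ=(t\otimes\pivotQ)(\qR\Delta(h)\pR)$, matched against Corollary~\ref{coro_symmetrized_simple_condition}(2) — though you leave the one genuinely quasi-Hopf computation, namely the evaluation of $\mathsf{tr}^r_H(\Xi(a\otimes m))$ carried out in the paper's Figure~\ref{fig:proof_rptc} and Eq.~\eqref{proof_rptc2}, as an expectation rather than performing it. For Part 3 you replace the paper's citation of \cite[Thm.\,2.6]{BBG} by a direct argument ($\Hom_H(M,H)\cong M^*$ via self-injectivity and a dimension count); that is sound and is essentially the content of the cited result.

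The genuine gap is in the forward direction of Part 1. You correctly observe that the Part 2 computation is unimodularity-free and yields $t(h)\,\oneQ=(t\otimes\pivotQ)(\qR\Delta(h)\pR)$ for any symmetric $t$ extending to a right modified trace. But the step you then explicitly ``grant'' — that this forces the twisted symmetry $t(ab)=t\big((b\leftharpoonup\modulus)a\big)$ — does not follow ``by the same manipulations'' as Proposition~\ref{prop:coint-properties}(4): that Nakayama-type identity is derived from the cointegral equation, and when $\modulus\neq\counit$ your equation is \emph{not} the equation of Lemma~\ref{prop:NewVersion3.7}(2) characterising symmetrised right cointegrals (the two differ by the factors $\modulus(\coassQ_1)\cdots\pivotQ\inv S(\coassQ_3)$), so $t$ need not be proportional to $\cointSymR$ and it is unclear where $\modulus$ would enter your computation at all. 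In the Hopf case one can argue through Radford's spaces $R_g$ of Remark~\ref{rem:new} (this is how \cite{FOG} obtains the stronger Hopf-algebra statement), but neither the paper nor you develop the quasi-Hopf analogue of that machinery. The paper sidesteps the issue entirely: a non-degenerate modified trace makes $h\mapsto\modTr_H(r_h)$ a non-degenerate \emph{symmetric} form on $H$ using only cyclicity (the partial-trace property is not needed), and unimodularity is then \cite[Prop.\,5.6]{HN-integrals}. You should either invoke that result or actually supply the missing derivation; as written, Part 1 ($\Rightarrow$) is incomplete.
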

    \begin{proof}
        \textit{(1)}
        If $\modTr_\bullet$ is a non-degenerate left or right modified trace, then 
        $H\ni h\mapsto \modTr_H(r_h)\in \field$ is a non-degenerate symmetric linear form
        on $H$.
        Unimodularity follows from \cite[Prop.\,5.6]{HN-integrals}.
        The converse direction amounts to parts 2 and 3.

\smallskip

\noindent
        \textit{(2)}
        Suppose now that $H$ is unimodular, and let $\modTr_\bullet$ be a family of trace maps on $\hpmod$ (not necessarily left/right modified traces).
        Let $t\in H^*$ be the symmetric form on $H$ which corresponds to $\modTr_\bullet$ via Proposition~\ref{prop_sym_lin_form_extends_uniquely}.
        We will now compute both sides of \eqref{thm_redLemma_right} in Lemma~\ref{lem:redLemma} separately and then use that lemma to prove the statement.
        
Let $W \in \hmod$ and $f \in \End_H(H \otimes W)$.

\smallskip

\noindent
$\modTr_{H\otimes W}(f)$:
        By Lemma \ref{lemma_endomorphisms_Hop_matrix}, every 
        $f\in\End_H(H\otimes W)$ is of the form
        \begin{align}\label{eq:parametrise-End(HW)}
            f = \sum_{(a,m)} \Fr\circ (r_a \otimes m) \circ \Gr\ ,
        \end{align}
        where $a\otimes m$ is a simple tensor in $H\op \otimes \End_\field(W)$.
        For simplicity and without loss of generality we will assume that
        $f$ actually corresponds to the simple tensor $a\otimes m$.
        By cyclicity of $\modTr_\bullet$ we get
        \begin{align}\label{proof_rptc}
             \modTr_{H\otimes W}(f)
             &=
             \modTr_{H\otimes W}( \Fr \circ (r_a \otimes m) \circ \Gr )
             \notag \\
             &=
             \modTr_{H\otimes \trivialMod{W}}( r_a \otimes m )
             =
             \mathsf{tr}_\field(m) \ t(a)\ ,
        \end{align}
        where $\mathsf{tr}_\field(m)$ is the trace of the linear operator $m$.
        This can be seen by choosing any basis of $W$ and considering a decomposition 
        of $H\otimes \trivialMod{W}$ into $(\dim W)$ copies of $H$.
        
        \begin{figure}[tb]
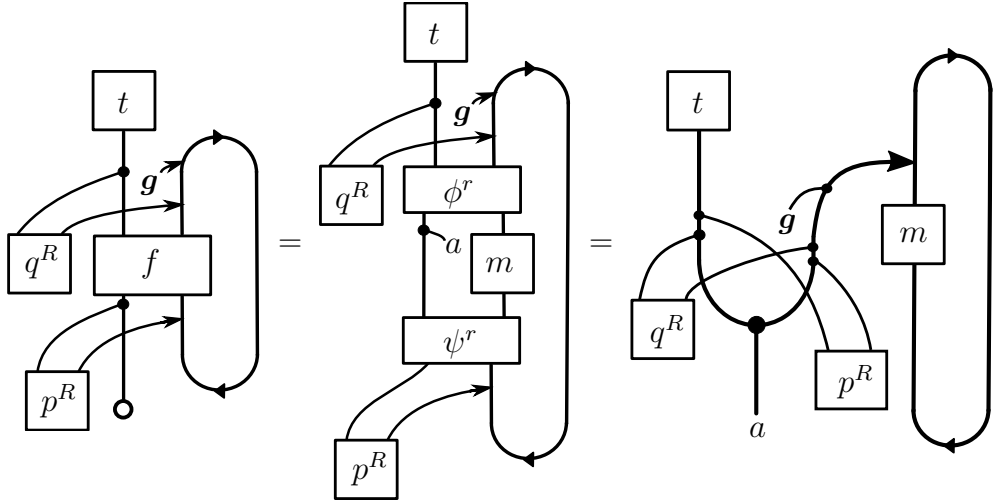

            \begin{equation*}
                \scalebox{0.8}{
                    \ipic{-0.5}{proof_rptc_1}{0.3}
                    \put(-67,65){\scalebox{1.3}{$t$}}
                    \put(-56,29){\scalebox{1.2}{$\pivotQ$}}
                    \put(-56,-10){\scalebox{1.3}{$f$}}
                    \put(-111,-10){\scalebox{1.3}{$\qR$}}
                    \put(-103,-76){\scalebox{1.3}{$\pR$}}
                }
                \ =
                \scalebox{0.8}{
                    \ipic{-0.5}{proof_rptc_2}{0.3}
                    \put(-67,98){\scalebox{1.3}{$t$}}
                    \put(-55,62){\scalebox{1.2}{$\pivotQ$}}
                    \put(-111,20){\scalebox{1.3}{$\qR$}}
                    \put(-60,23){\scalebox{1.3}{$\Fr$}}
                    \put(-59,-1){\scalebox{1.3}{$a$}}
                    \put(-40,-10){\scalebox{1.3}{$m$}}
                    \put(-60,-48){\scalebox{1.3}{$\Gr$}}
                    \put(-103,-108){\scalebox{1.3}{$\pR$}}
                }
                \ =
                \scalebox{0.8}{
                    \ipic{-0.5}{proof_rptc_3}{0.3}
                    \put(-142,65){\scalebox{1.3}{$t$}}
                    \put(-101,11){\scalebox{1.2}{$\pivotQ$}}
                    \put(-162,-43){\scalebox{1.3}{$\qR$}}
                    \put(-115,-88){\scalebox{1.3}{$a$}}
                    \put(-44,04){\scalebox{1.3}{$m$}}
                    \put(-73,-67){\scalebox{1.3}{$\pR$}}
                }
            \end{equation*}
            \caption{Calculating the left hand side of 
                \eqref{proof_rptc2}.
                The string diagrams are all in $\Vect$.
                To arrive at the initial string diagram, recall the expression \eqref{eq:right-eval-explicit} for the right evaluation map.
                The first step is just substitution of $f$ from \eqref{eq:parametrise-End(HW)}.
                In the second step, we use that $\Gr(\pR_1\otimes \pR_2 w)=\oneQ \otimes w$
for all $w\in W$ (which in turn is immediate from $\Gr \circ \Fr = \id$) and substitute the definition of $\Fr$.
            }
            \label{fig:proof_rptc}
        \end{figure}
    
\smallskip

\noindent
$\modTr_H \left( \mathsf{tr}^r_W(f) \right)$: Here we use that
        $\modTr_H \left( \mathsf{tr}^r_W(f) \right)
        = 
        t\left( \textsf{tr}_W^r(f) (\oneQ) \right)$ and then rewrite
        the resulting expression as in Figure~\ref{fig:proof_rptc}. Altogether, this gives
\begin{align}\label{proof_rptc2}
	\modTr_H \left( \mathsf{tr}^r_W(f) \right)
	=
	t(\qR_1 a\sweedler{1} \pR_1) 
	\mathsf{tr}_{\field}
	\left(
	\rho(\pivotQ \qR_2 a\sweedler{2} \pR_2\otimes -)
	\circ m
	\right)\ ,
\end{align}
where $\rho:H\otimes_{\field} W \to W$ is the
action of $H$ on $W$. 

\smallskip

Since \eqref{proof_rptc} and \eqref{proof_rptc2} hold in particular for $W=H$, the left regular module, and for all $a$, $m$, we can rephrase condition \eqref{thm_redLemma_right} in Lemma~\ref{lem:redLemma} as follows: the symmetric linear form $t$ on $H$ extends to a right modified trace on $\hpmod$ if and only if
        \begin{align}
            t(a) \oneQ
            =
            (t\otimes \pivotQ)
            \left( \qR \Delta(a) \pR \right)\ .
        \end{align}

        But this is just the defining equation \eqref{eq_symmetrized_right_cointegral_version_of_modTr} 
        for a symmetrised right cointegral.
        
        \medskip

        The left version of the proof is completely analogous and uses 
        (\ref{eq_symmetrized_left_cointegral_version_of_modTr}).
        
\smallskip

\noindent
\textit{(3)}
        By Proposition \ref{prop_symmetrized_is_nondegenerate} the symmetrised 
right/left cointegrals are non-degenerate.
        It is shown in \cite[Thm.\,2.6]{BBG} that this implies that the corresponding right/left modified traces are non-degenerate in the sense of Definition~\ref{def:mod-tr}\,(ii).
    \end{proof}

\section{Example: symplectic fermion quasi-Hopf algebra}\label{Sec:SymFerm}

In this section we will use Theorem~\ref{thm:symcoint-is-modtr} to compute the modified trace for the so-called symplectic fermion quasi-Hopf algebras defined in \cite{FGR2}.
One reason that these quasi-Hopf algebras are of interest is their relation to a fundamental example of logarithmic two-dimensional conformal field theories, namely the symplectic fermion conformal field theory, see \cite{FGR2} for more details and references.

\subsection*{Quasi-Hopf structure}
The family of symplectic fermion ribbon quasi-Hopf algebras $\symFerm=\symFerm(N,\beta)$,
where $N$ is a non-zero natural number and $\beta \in \mathbb{C}$ satisfies $\beta^4=(-1)^N$, is defined as follows \cite[Sec.\,3]{FGR2}.
As a $\mathbb{C}$-algebra, $\symFerm$ is a unital associative algebra generated by
\begin{align}\label{ex_symFerm_generators}
    \{\,\genK, \genF_i^\epsilon \ |\ 1\leq i \leq N,~ 
    \epsilon= \pm \,\}\ .
\end{align}
With the elements 
\begin{align}
    \eQ_0  = \tfrac{1}{2}(\oneQ + \genK^2)
    \ , \qquad
    \eQ_1  = \tfrac{1}{2}(\oneQ - \genK^2)
\end{align}
we can write the defining relations for $\symFerm$ as
\begin{align}
    \{\genF^{\pm}_i,\genK \} = 0\ ,
    \quad
    \{\genF^+_i, \genF^-_j \} = \delta_{i,j} \eQ_1\ ,
    \quad
    \{\genF^\pm_i, \genF^\pm_j \} = 0\ ,
    \quad
    \genK^4 = \oneQ\ ,
\end{align}
where $\{-,-\}$ is the anticommutator. 
Then 
$\eQ_0, \eQ_1$ are central orthogonal idempotents with 
$\eQ_0 + \eQ_1 = \oneQ$. 
The dimension of $\symFerm$ is $2^{2N+2}$.

It is enough to specify the quasi-Hopf algebra structure on generators. 
The coproduct is
\begin{align}
    \Delta(\genK) 
    &= \genK \otimes \genK - (1+(-1)^N)\ \eQ_1 \genK \otimes \eQ_1 \genK \ ,
    \notag\\ 
    \Delta(\genF^\pm_i)
    &= \genF^\pm_i \otimes \oneQ + \omega_\pm \otimes \genF^\pm_i \ ,
\end{align}
where $\omega_\pm = (\eQ_0 \pm i \eQ_1) \genK$. 
The counit is 
\begin{align}
    \counit(\genK) = 1 \ ,
    \qquad 
    \counit(\genF^\pm_i) = 0\ .
\end{align}
We introduce
\begin{align}
    \betaQ_\pm = \eQ_0 + \beta^2 (\pm i\genK)^N \eQ_1
\end{align}
to define the coassociator and its inverse as
\begin{align}
    \coassQ^{\pm 1} = 
    \oneQ \otimes \oneQ \otimes \oneQ
    + \eQ_1 \otimes \eQ_1 \otimes 
    \left\{
        \eQ_0(\genK^N - \oneQ)
        + \eQ_1(\betaQ_\pm  - \oneQ) 
    \right\}\ .
\end{align}
Finally, the antipode $S$ and the evaluation and coevaluation elements 
$\alphaQ$ and $\betaQ$ are given by
\begin{alignat}{2}
    S(\genK) &= \genK^{(-1)^N} = (\eQ_0 + (-1)^N \eQ_1)\genK \ ,
    \qquad\qquad && \alphaQ = \oneQ \ ,
    \notag\\ 
    S(\genF^\pm_i) &= \genF^\pm_i (\eQ_0 \pm (-1)^N i\eQ_1) \genK \ , 
    && \betaQ = \betaQ_+\ .
\end{alignat}
For convenience we also state the inverse antipode on generators:
\begin{align}
    S\inv(\genK) = \genK^{(-1)^N} \ , \qquad
    S\inv(\genF^\pm_i) = \omega_{\pm} \genF^\pm_i\ .
\end{align}
Note that $S(\betaQ_\pm)=S\inv(\betaQ_\pm)=\betaQ_\mp$, and $\betaQ_+\betaQ_-=\oneQ$.

The pivot of $\symFerm$ is\footnote{
	The symbol $\pivotQ$ has a slightly different meaning in \cite{FGR2}, and so the expression for $\pivotQ$ stated there differs from the one given here.}
\begin{align}\label{eq:SF-pivot}
	\pivotQ 
    = (\eQ_0 + (-i)^{N+1} \eQ_1 \genK^N )\genK\ .
\end{align}
>From \cite[Eq.\,(3.35)]{FGR2} we know that the 
Drinfeld twist and its inverse are given by 
\begin{align}
	\Dt^{\pm 1} 
	= \eQ_0 \otimes \oneQ 
	+ \eQ_1 \otimes \eQ_0 \genK^N 
	+ \eQ_1 \betaQ_\mp \otimes \eQ_1 \ .
\end{align}
One furthermore computes
\begin{align}
	\qR &= \oneQ \otimes \oneQ + \eQ_1 \otimes\big(  \eQ_1 
	( \betaQ - \oneQ ) \big) \ ,
	\notag\\ 
	\pR &= 
	\oneQ\otimes \oneQ + \eQ_0\otimes \big(\eQ_1(\betaQ - \oneQ )\big) 
	\ , 
	\notag\\ 
	\qL &= \oneQ \otimes \oneQ 
	+ \eQ_1 \otimes 
	\left\{
	\eQ_0(\genK^N - \oneQ)
	+ \eQ_1(\betaQ - \oneQ) 
	\right\}
	\ , 
	\notag\\ 
	\pL &= \betaQ_- \otimes \oneQ
	+ \eQ_1 \betaQ_- \otimes 
	\left\{
	\eQ_0(\genK^N - \oneQ)
	+ \eQ_1(\betaQ_- - \oneQ)     
	\right\}\ .
\end{align}

\medskip 

For later use, we fix a basis of $\symFerm$. 
The basis elements are
\begin{align}
	\basisEl{a}{b}{i}
	\vcentcolon=
	\left(
	\prod_{j=1}^{|\vec{a}|}
	\genF^+_{a_j}
	\right)
	\left(
	\prod_{k=1}^{|\vec{b}|}
	\genF^-_{b_k}
	\right)
	\genK^i\ ,
\end{align}
where $i\in \mathbb{Z}_4$, and $\vec{a},\vec{b}$ are strictly ordered multi-indices 
of lengths $0\leq |\vec{a}|,|\vec{b}| \leq N$. 
By ``strictly ordered'' we mean that for $\vec a = (a_1,a_2,\dots,a_{|\vec a|})$ we have
$1\leq a_1 < \ldots < a_{|\vec{a}|}\leq N$,
and similarly for $\vec{b}$.
The element corresponding to $\basisEl{a}{b}{i}$ in the dual basis is denoted by
\begin{align}
    \big( \basisEl{a}{b}{i} \big) ^*.
\end{align}
We will use the shorthand
\begin{align}
	\vec{N}=(1,2,\ldots,N) \ .
\end{align}

Using this notation  we can state that
\begin{align}
	\intQ = \sum_{j=0}^3 \basisEl{N}{N}{j}
\end{align}
is both a left and a right integral in $\symFerm$ \cite[Sec.\,3.5]{FGR2}.
In particular, $\symFerm$ is unimodular.

\medskip

The quasi-Hopf algebra $\symFerm$ can be equipped with an $R$-matrix and a ribbon element, turning it into a ribbon quasi-Hopf algebra.
In \cite[Prop.\,3.2]{FGR2} it was shown that it is in fact a factorisable ribbon quasi-Hopf algebra.
Factorisability implies unimodularity \cite[Sec.\,6]{BT1}, giving another argument showing that $\symFerm$ is unimodular.
A ribbon category is in particular pivotal. The pivot in \eqref{eq:SF-pivot} was obtained as $\pivotQ = \ribbon\inv \drinfeldElement$, where $\ribbon$ is the ribbon element and $\drinfeldElement$ is the Drinfeld element.

\subsection*{Modified trace}
We will see that the spaces of left and right modified traces coincide for $\symFerm$.
To compute the modified trace explicitly, we first find the (also coinciding) left and
right symmetrised cointegrals via Corollary~\ref{coro_symmetrized_simple_condition}. Then we employ Theorem~\ref{thm:symcoint-is-modtr} and the relation~\eqref{eq:symCoint-to-modTr} to obtain the value of the modified trace on the projective generator $\symFerm$.

\begin{prop}\label{prop_symmetrized_cointegral_for_symFerm}
    The linear form 
	\begin{align}\label{eq:sf-symcoint}
	\cointSymR = (\beta^2 + i)\cdot \left( \basisEl{N}{N}{1} \right)^*
	+ (\beta^2 - i)\cdot \left( \basisEl{N}{N}{3} \right)^*
	\end{align}
	is simultaneously a left and a right symmetrised cointegral for $\symFerm$.
\end{prop}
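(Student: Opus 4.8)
The plan is to check directly that the form in \eqref{eq:sf-symcoint} solves the two characterising identities of Corollary~\ref{coro_symmetrized_simple_condition}: equation \eqref{eq_symmetrized_right_cointegral_version_of_modTr} for the right case, and equation \eqref{eq_symmetrized_left_cointegral_version_of_modTr} with $\cointSymL$ taken to be the same form, for the left case. This is legitimate because $\symFerm$ is unimodular, and because by Theorem~\ref{thm:symcoint-is-modtr}\,(2) together with Proposition~\ref{prop:coint-properties}\,(1) the spaces of symmetrised left and right cointegrals are each one-dimensional; it therefore suffices to produce one nonzero solution of each identity, and the form \eqref{eq:sf-symcoint} is nonzero since $\beta^2+i$ and $\beta^2-i$ cannot vanish simultaneously. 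The organising device is the filtration $F_{\le d}=\operatorname{span}\{\basisEl{a}{b}{i}:|\vec a|+|\vec b|\le d\}$ by fermion number, which is respected by the product and by the coproduct: indeed $\genK,\eQ_0,\eQ_1,\betaQ,\qR,\pR,\qL,\pL,\pivotQ\in F_{\le0}$, while $\Delta(\genF^\pm_j)=\genF^\pm_j\otimes\oneQ+\omega_\pm\otimes\genF^\pm_j$ preserves the total fermion degree.

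First I would cut down the elements $h$ to be tested. Both sides of each identity are linear in $h$, so it is enough to take $h=\basisEl{a}{b}{m}$. The left-hand side $\cointSymR(h)\,\oneQ$ is supported on fermion degree $2N$, so it vanishes unless $(\vec a,\vec b)=(\vec N,\vec N)$; and since $\cointSymR$ is evaluated on one tensor factor of $\qR\Delta(h)\pR$ (resp.\ $\qL\Delta(h)\pL$), whereas $\qR,\pR,\pivotQ$ and their left counterparts lie in $F_{\le0}$, the right-hand side can be nonzero only if that factor already carries fermion degree $2N$, which by the filtration forces $h$ into degree $2N$ as well. Thus only $h=\basisEl{N}{N}{m}$ with $m\in\mathbb Z_4$ survive. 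For such $h$ the sole part of $\Delta(h)$ placing all $2N$ fermions on the $\cointSymR$-factor is the term in which each $\Delta(\genF^\pm_j)$ contributes its first (resp.\ second) summand; collecting it puts $\basisEl{N}{N}{0}$ on that factor and a fermion-free element on the other, the remaining $\genK$-dependence being carried by $\Delta(\genK^m)$. Here I use that $\basisEl{N}{N}{0}$ commutes with $\genK$ and with the idempotents (an even number of fermions passes $\genK$), so that $\basisEl{N}{N}{0}\genK^m=\basisEl{N}{N}{m}$.

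It then remains to evaluate, for each $m\in\{0,1,2,3\}$, the sandwiched expression $\qR\bigl(\basisEl{N}{N}{0}\otimes\oneQ\bigr)\Delta(\genK^m)\,\pR$ under $\cointSymR\otimes\pivotQ$, using that $\cointSymR(\basisEl{N}{N}{m})=(\beta^2+i)\delta_{m,1}+(\beta^2-i)\delta_{m,3}$. The cases $m=0$ and $m=2$ produce zero: the left legs of $\qR,\pR$ and the factor $\Delta(\genK^m)$ contribute only even powers of $\genK$, so the $\cointSymR$-factor stays in even $\genK$-parity and is annihilated, matching $\cointSymR(\basisEl{N}{N}{0})=\cointSymR(\basisEl{N}{N}{2})=0$. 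The real content is in $m=1,3$, where $\Delta(\genK)$ splits over the four sectors $\eQ_a\otimes\eQ_b$ (with the coefficient $-(-1)^N$ in the $\eQ_1\otimes\eQ_1$ sector), and one must follow how the factors $\eQ_1(\betaQ-\oneQ)$ from $\qR$ and $\pR$ interact with $\betaQ=\eQ_0+\beta^2(i\genK)^N\eQ_1$ and with $\pivotQ=(\eQ_0+(-i)^{N+1}\eQ_1\genK^N)\genK$ on the second leg. The main obstacle is exactly this sector- and sign-bookkeeping, including the even/odd-$N$ dichotomy of $\Delta(\genK)$ and the powers $(\pm i\genK)^N$; the assertion to be confirmed is that the surviving terms recombine into $(\beta^2+i)\,\oneQ$ for $m=1$ and $(\beta^2-i)\,\oneQ$ for $m=3$, i.e.\ precisely $\cointSymR(\basisEl{N}{N}{m})\,\oneQ$.

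Finally, I would carry out the mirror computation for the left identity \eqref{eq_symmetrized_left_cointegral_version_of_modTr}: the fermions are now routed to the second tensor factor, so the surviving coproduct term carries $\basisEl{N}{N}{0}$ on the second leg (with a fermion-free $\omega$-product on the first), and $\qL,\pL,\pivotQ\inv$ replace $\qR,\pR,\pivotQ$. The same four-case analysis, reading $\cointSymR$ off the second leg, shows that \eqref{eq:sf-symcoint} solves \eqref{eq_symmetrized_left_cointegral_version_of_modTr} as well. Combined with the one-dimensionality of each space, this identifies the single form \eqref{eq:sf-symcoint} as simultaneously a left and a right symmetrised cointegral, as claimed.
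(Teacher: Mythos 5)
Your proposal follows essentially the same route as the paper: both verify the two identities of Corollary~\ref{coro_symmetrized_simple_condition} directly, use the structure of the coproduct on basis elements (your fermion-number filtration is exactly the paper's ``lower terms'' observation in \eqref{symFerm_coproduct_basis}) to reduce to the four elements $h=\basisEl{N}{N}{m}$, $m\in\mathbb{Z}_4$, and then leave the remaining four-case evaluation as a direct computation. Your extra appeal to one-dimensionality of the cointegral spaces is superfluous---Corollary~\ref{coro_symmetrized_simple_condition} is an equivalence, so verifying the identities already suffices---but it is harmless.
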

\begin{proof}
We will verify that $\cointSymR$ satisfies both conditions in 
Corollary~\ref{coro_symmetrized_simple_condition}.
To this end, we first note that the coproduct takes the following form on elements of the above basis:
\begin{align}\label{symFerm_coproduct_basis}
\Delta(\basisEl{l}{a}{i})
=& \left( \basisEl{l}{a}{i} \otimes \genK^i + 
\omega_+^{|\vec{a}|} \omega_-^{|\vec{b}|}\genK^i \otimes \basisEl{l}{a}{i} 
+ \text{ (lower terms)}
\right) \notag\\ 
&\times 
\left(
\eQ_0 \otimes \eQ_0 +
\eQ_0 \otimes \eQ_1 +
\eQ_1 \otimes \eQ_0 +
(-1)^{(N+1)i} \eQ_1 \otimes \eQ_1
\right) \ ,
\end{align}
where in each tensor factor in ``(lower terms)'' the number of $\genF^+$'s is strictly 
less than $|\vec{a}|$, or the number of $\genF^-$'s is strictly less than $|\vec{b}|$, or both.
Therefore, both sides of the two conditions in
Corollary~\ref{coro_symmetrized_simple_condition} vanish identically unless one chooses $h
= \basisEl{N}{N}{i}$, $i \in \{0,1,2,3\}$. In these four cases a straightforward
computation shows that the conditions in Corollary~\ref{coro_symmetrized_simple_condition} hold.
\end{proof}

Note that because $\beta^4 = (-1)^N$,
for odd $N$ only one of the two summands in \eqref{eq:sf-symcoint} is present, the other coefficient is zero.
For $N$ even, both summands are present.

Since the symmetrised cointegral is two-sided, so is the corresponding modified trace.
By~\eqref{eq:symCoint-to-modTr} the explicit value of the modified trace on $f \in \End_\symFerm(\symFerm)$ is
\begin{align}
    \modTr_\symFerm(f) = \widehat{\coint}(f(\one)) \ ,
\end{align}
with $\widehat{\coint}$ as in \eqref{eq:sf-symcoint}. 

\medskip

The modified trace has also been computed by a different method in \cite[Sec.\,9]{GR}, namely by using the existence of a simple projective object in $\symFerm\text{-mod}$.
There, the modified trace is given on the four indecomposable projectives.
To relate the two computations, first note that the central idempotents of $\symFerm$ are
\begin{align}
	\eQ_0 
	\ , \qquad 
	\eQ_1^\pm = \tfrac12 \eQ_1 (\oneQ \pm \beta^{-1} \ribbon)
	= \tfrac12 \eQ_1 \Big( \oneQ \mp i \genK 
    \prod_{k=1}^N (\oneQ - 2 \genF^+_k \genF^-_k) \Big) \ ,
\end{align}
see~\cite[Sec.\,3.6]{FGR2}.
The decomposition of the right regular module $\symFerm$ is
\begin{align}
\symFerm = P_{0+} \oplus P_{0-} \oplus X_{1+}^{\oplus 2^N} \oplus X_{1-}^{\oplus 2^N}  \ ,
\end{align}
where $P_{0\pm}$ are the projective covers of the two one-dimensional simple modules of $\symFerm$ and $X_{1\pm}$ are projective simple objects of dimension $2^N$ \cite[Sec.\,3.7]{FGR2}.
The projections to $P_{0\pm}$ are given by right-multiplication with the (non-central) idempotents $\eQ_0^\pm = \tfrac12 (\one \pm \genK)\eQ_0$.
The central idempotents $\eQ_1^\pm$ project to the direct sums $X_{1\pm}^{\oplus 2^N}$.
Set
\begin{align}
x_\pm = \left(\prod_{j=1}^N \genF^+_j \genF^-_j\right) \eQ_0^\pm \ ,
\qquad
y_\pm = \eQ_1^\pm \ .
\end{align}
Note that $x_\pm$ and $y_\pm$ are central in $\symFerm$ \cite[Sec.\,3.6]{FGR2}.
It is straightforward to compute the modified trace of $r_{x_{\pm}}, r_{y_{\pm}} \in \End_{\symFerm}(\symFerm)$:
\begin{align}
\modTr_\symFerm(r_{x_{\pm}}) = \pm \tfrac12 (-1)^{\frac12 N(N-1)} \beta^2 \ ,
\quad
\modTr_\symFerm(r_{y_{\pm}}) = \pm \tfrac12 (-1)^{\frac12 N(N-1)} (-2)^N \ ,
\end{align}
where $r_h$ denotes the right multiplication with $h\in \symFerm$,
cf.~\eqref{eq:left-right-mult}.
This agrees with \cite[Sec.\,9]{GR} up to a normalisation factor of $\tfrac12 (-1)^{\frac12 N(N+1)}$.

\medskip

Since $\pivotQ$ is of order two, the left and right cointegrals also agree. One can compute 
the cointegral for $\symFerm$ by shifting 
the symmetrised cointegral from Proposition~\ref{prop_symmetrized_cointegral_for_symFerm}
by $\pivotQ$.
Similar to the symmetrised cointegral, it is non-vanishing only on the top components, and with 
\begin{alignat}{2}
    a_\pm &= \beta^2 \pm \delta_{N,\text{even}}\ 
    \quad ,\quad
    b_\pm &&= \pm i \delta_{N,\text{odd}}
\end{alignat}
it can be expressed as
    \begin{align}
        \coint = 
        a_+ \left( \basisEl{N}{N}{0} \right)^*
        +
        b_+ \left( \basisEl{N}{N}{1} \right)^*
        +
        a_- \left( \basisEl{N}{N}{2} \right)^*
        +
        b_- \left( \basisEl{N}{N}{3} \right)^*\ .
    \end{align}

\newcommand\arxiv[2]      {\href{http://arXiv.org/abs/#1}{#2}}
\newcommand\doi[2]        {\href{http://dx.doi.org/#1}{#2}}

\end{document}